\documentclass[reqno]{amsart}
\usepackage[utf8]{inputenc}
\usepackage[english]{babel}
\usepackage{latexsym,amssymb,amsthm,xcolor}
\usepackage[centertags]{amsmath}
\usepackage{mathtools}
\usepackage{scrtime}
\usepackage{graphicx}
\usepackage{bbm}
\usepackage{framed}
\usepackage{fancyhdr}
\usepackage{appendix}   
\usepackage{enumerate}
\usepackage{tikz} 
\usepackage{pgfplots}
\usepackage{pgffor}
\usepackage{hyperref}
\usepackage{accents}           
\usepackage{microtype}

\makeatletter
\renewcommand{\p@enumii}{}
\makeatother

\textwidth15.5cm
\textheight22cm
\hoffset=-1cm
\voffset=-2cm

\pagestyle{fancy}
\lhead{}
\chead{}
\rhead{\thepage}

\cfoot{}
\lfoot{}
\rfoot{}

\mathtoolsset{showonlyrefs=true}

\newtheorem{proposition}{Proposition}[section]
\newtheorem{corollary}[proposition]{Corollary}
\newtheorem{lemma}[proposition]{Lemma}
\newtheorem{definition}[proposition]{Definition}
\newtheorem{theorem}[proposition]{Theorem}

\makeatletter                                           
\def\th@newremark{\th@remark\thm@headfont{\bfseries}}   
\makeatletter                                           
\theoremstyle{newremark}
\newtheorem{remark}{Remark}[section]

\numberwithin{equation}{section}


\newcommand{\eps}{\varepsilon}
\newcommand{\dx}{\mathrm{d}}                                   
\newcommand{\eqd}{\overset{\textup{d}}{=}}                     
\DeclareMathOperator*{\wlim}{w-lim}


\allowdisplaybreaks[4]

\newcommand{\mcA}{\mathcal{A}}

\newcommand{\mcD}{\mathcal{D}}

\newcommand{\mcF}{\mathcal{F}}

\newcommand{\mcQ}{\mathcal{Q}}

\newcommand{\mcZ}{\mathcal{Z}}


\newcommand{\bbE}{\mathbb{E}}

\newcommand{\bbN}{\mathbb{N}}

\newcommand{\bbP}{\mathbb{P}}

\newcommand{\bbR}{\mathbb{R}}

\newcommand{\bbZ}{\mathbb{Z}}



\begin{document}

\title{Longtime behavior of completely positively correlated Symbiotic Branching 
Model}
\author{Patric Karl Glöde, Leonid Mytnik}
\address{Technion---Israel Institute of Technology, Faculty of Industrial 
Engineering \& Management, Haifa 32000, Israel}
\email{Patric.Gloede@t-online.de, leonid@ie.technion.ac.il}
\date{September 2022}
\keywords{Mutually Catalytic Branching, Symbiotic Branching, Parabolic Anderson 
Model, Coexistence}
\subjclass[2000]{Primary: 60J80; Secondary: 60J85}
\thanks{P.K. Glöde  was partially supported by the Technion Fund; both authors were partially supported by German Israeli Foundation grant 1170-186.6/2011 and L. Mytnik was partially supported by Israel Science Foundation Grant  1704/18}
\begin{abstract}
We study the longtime behavior of  a continuous state Symbiotic Branching 
Model (SBM). SBM can be seen as a unified model generalizing the Stepping 
Stone Model, Mutually Catalytic Branching Processes, and the Parabolic Anderson 
Model. It was introduced by Etheridge and Fleischmann~\cite{EF04}. The 
key parameter in these models is the local correlation $\rho$ between the 
driving Brownian Motions.  The longtime behavior of all SBM exhibits a dichotomy 
between coexistence and non-coexistence of the two populations depending 
on the recurrence and transience of the migration and also in many cases on the branching rate. The most 
significant gap in the understanding of the longtime behavior of SBM is for 
positive correlations in the transient regime. In this article we give a precise 
description of the longtime behavior of the SBM  with $\rho=1$ with not necessarily identical initial 
conditions.
\end{abstract}

\maketitle

\section{Introduction and Results}

\subsection{Symbiotic Branching Dynamics}

The Symbiotic Branching Model (SBM)  is a continuous state model of a two type 
branching population living and migrating in a geographic space. The 
branching rate of each of the two populations locally depends on the 
size of the other population. While the SBM is a continuous state model the 
dynamics are best understood by having an informal look at the discrete 
particle approximation. Both the continuous and the discrete model depend 
essentially on the branching rate $b>0$ and the correlation parameter 
$\rho\in[-1,1]$. Geographic space is represented by $\bbZ^d$. So the model 
describes the evolution in time of the number of particles of each population 
at each site $k\in\bbZ^d$ and let us denote the process by 
$(X,Y)=(X_t,Y_t)_{t\geq0}$. We refer to the particles of the two populations as 
particles of type $1$ and $2$, respectively. The evolution of the particle model is as follows: 

\begin{itemize}
 \item At each site, each pair of particles of opposite type share an 
exponential clock of rate $b |\rho|$. When the clock rings, both particles 
die and the following happens depending on the parameter $\rho$:

If $\rho\in[-1,0)$, with equal probability either the type 1 particle 
has two offspring and the type 2 particle has no offspring or vice 
versa.

If $\rho\in(0,1]$, with equal probability either both particles have zero 
or two offspring.

 \item At time $t>0$, each particle of type 1 living at site $k$ has an 
exponential clock of rate $b (1-|\rho|) Y_t(k)$, each particle of type 2 at site 
$k$ has an 
exponential clock of rate  $b (1-|\rho|) X_t(k)$. When the clock rings, the 
particle dies and with equal probability has either zero or two offspring.
 \item All particles perform simple continuous, rate $1$, random walks independently from each other.
 \item The migration and the branching dynamics are independent from each other.
\end{itemize}

The particle system described above has a scaling limit. It can be shown that 
if each particle is assigned mass $1/n$ and the initial mass is of order $n$,
then the rescaled particle system converges to a system of interacting 
diffusions $(u,v)=(u_t,v_t)_{t\geq0}$ which satisfy the following stochastic 
differential equations:
\begin{equation}
\begin{array}{l}
 \dx u_t(i)=\Delta u_t(i)\,\dx t+\sqrt{b u_t(i)v_t(i)}\,\dx W^u_t(i)\,,\quad t\geq 0,\\[1ex]
 \dx v_t(i)=\Delta v_t(i)\,\dx t+\sqrt{b u_t(i)v_t(i)}\,\dx W^v_t(i)\,, \quad t\geq 0, 
 \end{array}
\end{equation}
where $i\in\bbZ^d$ and $\{(W^u(i),W^v(i)):i\in\bbZ^d\}$ is an independent field 
of locally correlated planar Brownian motions and the spatial correlation is given by 
$\rho$. This model was introduced Etheridge and Fleischmann \cite{EF04} and is 
known as the Symbiotic Branching Model (SBM).

The SBM generalizes a couple of famous particle models. If $\rho=-1$, the 
Brownian motions $W^u(i)$ and $W^v(i)$  are 
totally anti-correlated for each $i\in \bbZ^d$. Under the additional condition that 
$u_0+v_0=\boldsymbol{1}$ for all 
$k\in\bbZ^d$, the process $u_t+v_t$ solves the heat equation and hence since the 
initial conditions 
are constant, $u_t+v_t\equiv1$ for all $t\geq0$. Therefore $v_t=1-u_t$ for all 
$t\geq0$. One can therefore rewrite \eqref{SBM} to obtain the well known system 
of differential equations for the 
stepping stone model (also known as interacting Fisher-Wright diffusions), see 
Shiga \cite{S80}. 
If $\rho=0$, the Brownians motions $W^u(i)$ and $W^v(i)$ are independent for each $i\in \bbZ^d$. This leads to the 
well known mutually catalytic branching model, studied by Dawson and Perkins 
\cite{DP98}, Cox, Dawson and Greven~\cite{bib:cdg04},  Cox, Klenke and Perkins~\cite{CKP00} among others. In continuous space, in dimension $d=2$, this models was studied in a series of papers by  Dawson et. al.~\cite{bib:defmpx02a}, \cite{bib:defmpx02b}, \cite{bib:dfmpx03}.
If $\rho=1$, the Brownian motions $W^u(i)$ and $W^v(i)$ are totally positively correllated  for each $i\in \bbZ^d$. Under the additional 
assumption that the identical initial conditions are the same for both 
populations, that is, $u_0=v_0$, the SBM coincides with the Parabolic Anderson Model 
(PAM). For more information about the PAM see Carmona and Molchanov 
\cite{CM94}, Greven and den Hollander \cite{GDH07}, 
Gl\"{o}de \cite{G06}.

Let us note, that after certain limiting procedures, the above models give a rise to the so called infinite rate mutually catalytic and symbiotic models, 
that were studied extensively in the recent years, see Klenke and Mytnik~\cite{bib:km10}, \cite{bib:km12a}, \cite{bib:km12b}, \cite{bib:km20}, 
Blath, Hammer and Ortgiese~\cite{bib:bho16}, Hammer, Ortgiese and Florian~\cite{bib:hov18}.

In our paper we are interested in the longtime behavior of the SBM with $\rho=1$ in the case of not necessarily equal initial conditions --- the case that has not been studied in the literature.
 The main 
question is whether both populations can survive forever or whether just one 
population will survive while the other population will die out. If there is a 
positive probability that both populations will survive we will say coexistence 
is possible. Otherwise we will say coexistence is impossible.

The longtime behavior of the SBM has been thoroughly studied for different correlation parameters $\rho$. 
For correlations $\rho\in(-1,0]$ it 
has been proved that there is a clear dichotomy: 
coexistence is possible if and only if the migration is transient. See 
Blath, D\"{o}ring and  Etheridge \cite{BDE11}, Dawson and Perkins \cite{DP98}, and 
D\"{o}ring and Mytnik \cite{DM13}. 
 For $\rho=-1$, Shiga \cite{S80} has 
proved that in the recurrent regime coexistence is impossible for the particular case of $u_0+v_0=1$. 
For correlation $\rho\in(0,1)$, Blath, D\"{o}ring, and Etheridge \cite{BDE11} 
have proved that if the migration is recurrent, coexistence is impossible. 

For $\rho=1$ in the case of identical initial conditions (the PAM) the phase 
transition between survival and non-survival (note that in the PAM there is only 
one population so it does not make sense to speak of coexistence of two 
populations) occurs in the transient regime, that is, survival  is impossible if 
the migration is recurrent. If the migration is transient then there is a 
critical branching parameter $b_\ast$ such that for $b>b_\ast$ survival is 
impossible while for $b<b_\ast$ survival is possible. See Theorem~\ref{t:DGH} below (which is essentially the result of Greven and den 
Hollander \cite{GDH07}). 

For correlation $\rho\in(0,1)$, Blath, D\"{o}ring, and Etheridge \cite{BDE11} 
have proved that if the migration is recurrent, coexistence is impossible. 
It is believed that for positive correlation 
there is a critical branching parameter such for all $b$ larger than 
this parameter coexistence is impossible also in the transient regime (like in the case of  the PAM). This 
conjecture however has not yet been proved.

Thus, note that for $\rho\in(-1, 0]$ and for the PAM 
the longtime behavior is fully characterized. Also, in the 
recurrent regime the longtime behavior of the SBM is fully understood for all 
$\rho$. However, in the transient regime there are gaps in the understanding of 
what happens for $\rho=-1$, with
$u_0+v_0\neq 1$,  for $\rho\in (0,1)$, and for $\rho=1$ with non-identical initial conditions.  In this paper our aim is to 
contribute towards a more complete understanding of the longtime behavior for 
positive correlations.

For $\rho\in (0,1]$ one of the main open questions related to the longtime behavior of SBM  is:
\begin{itemize}
 \item[] If the migration is transient, is there a critical parameter 
$b_\#(\rho)$ such that for $b<b_\#(\rho)$ coexistence is possible while for 
$b>b_\#(\rho)$ it is impossible.
\end{itemize}

Our main result, Theorem \ref{t:main} gives a partial answer to this 
question: we characterize coexistence/non-coexistence dichotomy for SBM for the case of  $\rho=1$ and 
with initial conditions that are not necessarily equal. 

The paper is organised as follows. In Section 
\ref{s:definitions} we formally introduce the SBM and related concepts that will be investigated.  
Section \ref{s:overview} states  existing relevant
results on the longtime behavior of the PAM which is of profound importance for studying the long time behavior of the SBM with correlation $\rho=1$. In Section \ref{s:mainresult} we 
present our main result for the longtime behavior of the SBM with correlation 
$\rho=1$. The proof of our result is split into two parts. 
In Section \ref{s:proofcoex} we treat the regime when the coexistence is possible while in Section 
\ref{s:proofnoncoex} we deal with the regime when the coexistence is impossible.

\subsection{Definitions}\label{s:definitions}

In this section we formally introduce the SBM. 
For a rigorous definition of the processes we need to specify an appropriate 
state space. Let $\varphi_\lambda:\bbZ^d\mapsto\bbR, \varphi_\lambda(k)= e^{\lambda |k|}$, 
$\lambda\in \bbR$ and define
\begin{eqnarray}
 E_\textup{tem} &:=&\{\phi:\bbZ^d\mapsto\bbR_+:\langle 
\phi,\varphi_{\lambda}\rangle<\infty\text{ for all } \lambda<0\}\,,\\
 E_\textup{fin}&:=&\{\phi:\bbZ^d\mapsto\bbR_+: 
\sum_{k\in \bbZ^d} \phi(k) <\infty\}\,,\\
 E_\textup{cpt}&:=&\{\phi:\bbZ^d\mapsto\bbR_+:\phi(k)=0\text{ for 
all but 
finitely many }k\in\bbZ^d\}\,.
\end{eqnarray}
Following definitions on  page 1091 of \cite{DP98}, for any $\lambda\in \bbR$ and $u,v: \bbZ^d\mapsto\bbR$, set 
$|u-v|_\lambda := \langle 
|u-v|,\varphi_\lambda\rangle$,
and thus we define metric on $ E_\textup{tem}$ as follows: 
\begin{equation}
d_{\text{tem}}(u,v):= \sum_{n=1}^\infty 2^{-n}
(|u-v|_{-\lambda_n}\wedge 1),
\end{equation}
where $\lambda_n\downarrow 0$.
Also $ E_\textup{fin}$ is topologized 
by the $l^1$-norm, $\|u-v|\|_1:= \sum_{k\in \bbZ^d} |u(k)-v(k)|$, and  $E_\textup{cpt}$   is topologized 
by the $l^\infty$-norm.



Now $\Omega_{\text{tem}}$ (resp. 
 $\Omega_{\text{fin}}$)
 is the space of $E_{\text{tem}}^2$-valued (resp.  
 $E_{\text{fin}}^2$-valued) continuous paths on $\bbR_+$ with the compact-open
topology. 

Whenever $x:\bbZ^d\mapsto\bbR_+$ and $x(i)=\theta\geq0$ for all $i\in\bbZ^d$ we use 
bold letters and write $x=\boldsymbol{\theta}$. We refer to this situation as a 
flat configuration.

In the following we denote the discrete Laplace operator by $\Delta$, that is, 
for $\phi:\bbZ^d\mapsto\bbR_+$ we set
\begin{equation}
 \Delta \phi(i):=\frac1{2d}\sum_{j\sim i} (\phi(j)-\phi(i))\,,\quad 
i\in\bbZ^d\,,
\end{equation}
where $j\sim i$ means $j$ is a neighbor of $i$. 

We denote the semigroup corresponding to the continuous-time rate $1$ simple symmetric 
random walk on $\bbZ^d$ by $(P_t)_{t\geq0}$ and the transition 
probabilities by $p_t(i,j)$, $t\geq0$, $i,j\in\bbZ^d$. That is, if 
$(Z_t)_{t\geq0}$ is a simple symmetric random walk on $\bbZ^d$ and 
$\phi:\bbZ^d\mapsto\bbR$ is a bounded function, then 
$P_t\phi(i)=E[\phi(Z_t)|Z_0=i]$ and 
$p_t(i,j)=P\{Z_t=j|Z_0=i\}$, where $t\geq0$ and $i,j\in\bbZ^d$. Also, 
we denote the Green's function of a simple symmetric random walk on $\bbZ^d$ ($d\geq 3$) 
 by $g(\cdot,\cdot)$, that is,
\begin{equation}
 g(x,y)=\int_0^\infty P_s\mathbbm{1}_{\{y\}}(x)\,\dx s=\int_0^t 
p_s(x,y)\,\dx s,\; x,y\in \bbZ^d. 
\end{equation}

Moreover if $X,Y$ are stochastic processes defined on the same probability 
space, we denote their quadratic co-variation by $[X,Y]_t$ and quadratic 
variation by $[X]_t$. Equality in distribution is denoted by $\eqd$.


\begin{definition}\label{d:solution}
Let $b>0$, $\rho=1$, and $(x,y)\in E_{\text{tem}}^2$. We say that a stochastic 
process $(u,v)=(u_t,v_t)_{t\geq0}$ is a Symbiotic Branching 
Model SBM$(1,b,x,y)$ on a filtered probability space 
$(\Omega,\mcA,\mcF=(\mcF_t)_{t\geq0},\bbP)$ if the sample 
paths of $(u,v)$ lie in $\Omega_{\text{tem}}$, and there exists a
family of planar Brownian motions 
$\{W(i)=(W_t(i))_{t\geq0}:i\in\bbZ^d\}$ adapted 
to the filtration $\mcF$ such that the following is satisfied:
 $(u,v)$ solves the following system of interacting stochastic 
differential equations 
\begin{equation}\label{SBM}
\left\{
\begin{array}{l}
 (u_0,v_0)=(x,y)\,,\\[1ex]
 \dx u_t(i)=\Delta u_t(i)\,\dx t+\sqrt{b u_t(i)v_t(i)}\,\dx W_t(i)\,,\quad
t\geq0, 
i\in\bbZ^d\,,\\[1ex]
 \dx v_t(i)=\Delta v_t(i)\,\dx t+\sqrt{b u_t(i)v_t(i)}\,\dx W_t(i)\,,\quad t\geq0, 
i\in\bbZ^d\,.
 \end{array}
 \right.
\end{equation}
\end{definition}
 In what follows, we assume throughout the paper that $$\rho=1.$$
Also, let us note, that for simplicity, whenever confusion is impossible we omit some or all of the 
parameters in the abbreviation
SBM$(1,b,u_0,v_0)$.



Next proposition provides the existence and uniqueness result for~\eqref{SBM}. 
\begin{proposition}
Let $b>0$,  and $(x,y)\in E_{\text{tem}}^2$.
 The system of stochastic differential equations~\eqref{SBM} has a 
unique strong solution with sample paths in  $\Omega_{\text{tem}}$. 
\end{proposition}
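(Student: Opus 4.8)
The plan is to exploit the special structure of \eqref{SBM} at $\rho=1$ to reduce the problem to a single stochastic differential equation on $\bbZ^d$, and then to run the Yamada--Watanabe programme: establish weak existence of a solution with paths in $\Omega_{\text{tem}}$ together with pathwise uniqueness, which together produce a unique strong solution. The reduction is as follows: since at $\rho=1$ both coordinates of \eqref{SBM} are driven by the same noise $W(i)$, the difference $w_t:=u_t-v_t$ has vanishing martingale part and hence solves the deterministic discrete heat equation $\partial_t w_t=\Delta w_t$, $w_0=x-y$, so that $w_t=P_t(x-y)$ is a fixed deterministic configuration. Writing $s_t:=u_t+v_t$ and using $4u_t(i)v_t(i)=s_t(i)^2-w_t(i)^2$, the sum solves
\begin{equation}\label{eq:seqn}
\dx s_t(i)=\Delta s_t(i)\,\dx t+\sqrt{b\,\bigl(s_t(i)^2-w_t(i)^2\bigr)}\,\dx W_t(i),\qquad i\in\bbZ^d,\ t\geq0,
\end{equation}
subject to $s_t(i)\geq|w_t(i)|$ for all $i$ (equivalently $u_t(i),v_t(i)\geq0$), with the diffusion coefficient understood to vanish when this fails; conversely $u_t=\tfrac12(s_t+w_t)$ and $v_t=\tfrac12(s_t-w_t)$. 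The only coupling between sites in \eqref{eq:seqn} is through the linear operator $\Delta$.

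\textbf{Weak existence.} I would construct a weak solution of \eqref{SBM} by the standard finite-box approximation: for $\Lambda_n\uparrow\bbZ^d$, solve \eqref{SBM} with the sites outside $\Lambda_n$ frozen at $0$; each finite system has a nonnegative weak solution, nonnegativity being preserved because $\sqrt{b\,uv}$ vanishes on $\{u=0\}\cup\{v=0\}$ while the drift there is nonnegative. Uniform tempered moment bounds follow from $\sqrt{b\,uv}\leq\tfrac12\sqrt b\,(u+v)$ together with the quasi-invariance $P_t\varphi_\lambda\leq e^{C(\lambda)t}\varphi_\lambda$ and Gronwall's inequality, and give both tightness of the approximations in $\Omega_{\text{tem}}$ and uniform control of $\bbE[\langle u_t+v_t,\varphi_\lambda\rangle]$. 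Any subsequential limit is then a weak solution of \eqref{SBM} with paths in $\Omega_{\text{tem}}$; this is the classical framework for infinite systems of interacting diffusions (cf. \cite{EF04,DP98}), which I would cite rather than reprove.

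\textbf{Pathwise uniqueness.} Let $(u,v)$ and $(\tilde u,\tilde v)$ solve \eqref{SBM} on the same stochastic basis with the same noise and $(u_0,v_0)=(\tilde u_0,\tilde v_0)$. By the reduction, $u_t-v_t=\tilde u_t-\tilde v_t=w_t$, so it suffices to show that $s:=u+v$ and $\tilde s:=\tilde u+\tilde v$ agree. The key deterministic estimate is the Hölder-$\tfrac12$ bound
\begin{equation}
\bigl|\sqrt{b(a^2-c^2)}-\sqrt{b(\tilde a^2-c^2)}\bigr|\leq 2\sqrt{bM}\,|a-\tilde a|^{1/2}\qquad(0\leq|c|\leq a,\tilde a\leq M),
\end{equation}
which follows by factoring $a^2-c^2=(a-|c|)(a+|c|)$ and a one-line computation. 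Feeding this into the classical Yamada--Watanabe scheme applied site by site --- smooth convex mollifications $\psi_n$ of $|\cdot|$ for which the second-order Itô term vanishes as $n\to\infty$ by the above bound; convexity of $\psi_n$ and self-adjointness of $\Delta$ bounding the Laplacian contribution by $C(\lambda)\langle\psi_n(s_t-\tilde s_t),\varphi_\lambda\rangle$; and a stopping time $\tau_M$ at which $\langle s_t+\tilde s_t,\varphi_\lambda\rangle$ first reaches $M$, which controls the Hölder constant --- yields $\bbE\bigl[\langle|s_{t\wedge\tau_M}-\tilde s_{t\wedge\tau_M}|,\varphi_\lambda\rangle\bigr]=0$ for all $t$ by Gronwall's inequality. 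Letting $M\to\infty$ gives $s\equiv\tilde s$, hence $(u,v)=(\tilde u,\tilde v)$.

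\textbf{Conclusion and main obstacle.} Weak existence and pathwise uniqueness give, via the Yamada--Watanabe theorem, a unique strong solution of \eqref{SBM} with paths in $\Omega_{\text{tem}}$. The step needing the most care is the pathwise uniqueness: one must control the degeneracy of $\sqrt{b\,u_t(i)v_t(i)}$ at the boundary of the state space, verify the Hölder-$\tfrac12$ estimate with a constant that remains manageable after localization, and sum the site-wise Yamada--Watanabe estimates against the tempered weight $\varphi_\lambda$ while checking that the infinitely many Laplacian terms contribute only the harmless term $C(\lambda)\langle|s_t-\tilde s_t|,\varphi_\lambda\rangle$ and no positive feedback. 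The remaining bookkeeping (path regularity, nonnegativity of the limit, passage to the martingale problem) is routine.
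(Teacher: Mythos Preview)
Your proposal is correct and follows essentially the same route as the paper: use that at $\rho=1$ the difference $u-v$ solves the deterministic heat equation to reduce \eqref{SBM} to a single scalar equation with H\"older-$\tfrac12$ diffusion coefficient, obtain weak existence (the paper simply cites \cite{BDE11}, you sketch the box approximation), prove pathwise uniqueness by the site-wise Yamada--Watanabe argument (the paper cites \cite{SS80}, you spell out the tempered-weight version), and conclude strong existence via Yamada--Watanabe. The only cosmetic difference is that the paper reduces to the equation for $u$ alone, with diffusion $\sqrt{bu_t(i)^2+bu_t(i)\eta_t(i)}$ where $\eta=v-u$, rather than to your sum $s=u+v$; since $u=\tfrac12(s+w)$ these reductions are equivalent.
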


\begin{proof}
Existence of a weak solution to~\eqref{SBM} is proved in Proposition~3.1 of~\cite{BDE11}. 

Now let us show the pathwise uniqueness. 
First note that $\eta_t:=v_t-u_t$ is deterministic and solves the 
discrete heat equation, that is $\dx \eta_t = \Delta \eta_t \,\dx t$.
Hence $(u,v)$ is uniquely determined if $u$ and $\eta$ are. For $\eta$ 
existence and uniqueness are known of course. For $u$ we have the following 
system of locally $1$-dimensional stochastic differential 
equations:
\begin{equation}\label{e:sde1}
 \dx u_t(i)=\Delta u_t(i)\,\dx t+\sqrt{b u_t(i)^2+bu_t(i)\eta_t(i)}\,\dx W^u_t(i)\,,\quad 
i\in\bbZ^d\,.
\end{equation}
For \eqref{e:sde1} pathwise uniqueness follows by the argument similar to the one used in the proof of  Theorem~3.2 in Shiga and 
Shimizu \cite{SS80}. Note that 
in~\cite{SS80} the diffusion coefficient does not depend on time. However, the Yamada-Watanabe 
argument used in \cite{SS80} obviously works just as well for our 
time-dependent diffusion coefficient since 
$\eta_t(i)
<\infty$ for all $t\geq0$.
Also \cite{SS80} assumes that at each site the process takes values in $[0,1]$. It is easy to 
seen however that for the argument in~\cite{SS80}to work it is sufficient to have 
$\sup_{t\in[0,T]}\bbE[|u_t(i)-u'_t(i)|]<\infty$ for all $T\geq0$, for any two solutions $u, u'$ to ~\eqref{e:sde1}. This can 
be immediately seen as follows. Recall that $(P_t)_{t\geq0}$ denotes the 
semigroup corresponding to the simple symmetric random walk on $\bbZ^d$. Then, 
by Proposition 3.1 of \cite{BDE11}, $\bbE[u_t(i)]=P_t u_0(i)$ for every 
$i\in\bbZ^d$ and thus $\sup_{t\geq0}\bbE[u_t(i)]<\infty$.

By Yamada-Watanabe theorem weak existence and pathwise uniqueness imply that there exists unique strong solution to~\eqref{e:sde1}
(see e.g. Theorem~2.2 in~ \cite{SS80} for the analogous result). 
\end{proof}

In the study of the SBM an important role is played by the total mass process 
and many results can 
be deduced from the behavior of this simpler process. The total mass of an 
element 
$x:\bbZ^d\mapsto\bbR_+$, is denoted by 
\begin{equation}\label{e:totalmass}
 \bar{x}=\langle x,\boldsymbol{1}\rangle=\sum_i x(i)\,.
\end{equation}

\begin{proposition}[Proposition 3.2 of \cite{BDE11}]\label{l:totalmass}
If $u_0,v_0\in E_\textup{fin}$, then the total 
mass processes $\bar{u}=(\bar{u}_t)_{t\geq0}$ and 
$\bar{v}=(\bar{v}_t)_{t\geq0}$ are non-negative, 
continuous, square integrable martingales and
\begin{align}
 \dx \bar{u}_t&= \sqrt{b \langle u_t, v_t\rangle }\,\dx \tilde{W}_t\,, \quad t\geq0, \\
 \dx \bar{v}_t&= \sqrt{b \langle u_t, v_t\rangle }\,\dx \tilde{W}_t\,, \quad t\geq0, 
\end{align}
where $\tilde{W}$ is a  Brownian motion with 
the variance  $[\tilde{W}_\cdot,\tilde{W}_\cdot]_t=t$.
\end{proposition}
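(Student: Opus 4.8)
The plan is to reduce everything to the single martingale $\bar u$. Since $\rho=1$, subtracting the two equations in~\eqref{SBM} shows that $\eta_t:=v_t-u_t$ solves the discrete heat equation $\dx\eta_t=\Delta\eta_t\,\dx t$ deterministically, with $\eta_0=v_0-u_0\in E_{\text{fin}}\subset\ell^1$; as $P_s$ is an $\ell^1$-contraction, its total mass is conserved, $\bar v_t-\bar u_t=\bar\eta_t=\bar\eta_0=\bar v_0-\bar u_0$ for all $t\ge0$. Hence $\bar v_t=\bar u_t+(\bar v_0-\bar u_0)$, so once the assertion is proved for $\bar u$ the identity for $\bar v$ follows with the \emph{same} driving process $\tilde W$. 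Non-negativity and continuity of $\bar u,\bar v$ are immediate from $u,v\ge0$ and the fact that $(u,v)$ has sample paths in $\Omega_{\text{tem}}$. Throughout I will use that, by Proposition~3.1 of~\cite{BDE11}, $\bbE[u_t(i)]=P_tu_0(i)$, hence $\bbE[\bar u_t]=\sum_i P_tu_0(i)=\bar u_0<\infty$; in particular $u_t\in E_{\text{fin}}$ a.s.\ for each fixed $t$.

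Next I would obtain the stochastic equation for $\bar u$ by a finite-box approximation. For $\Lambda_N:=\{i\in\bbZ^d:|i|\le N\}$, summing~\eqref{SBM} over $i\in\Lambda_N$ yields, pathwise,
\[
\sum_{i\in\Lambda_N}u_t(i)=\sum_{i\in\Lambda_N}u_0(i)+\int_0^t b^N_s\,\dx s+M^N_t,
\]
with $b^N_s:=\sum_{i\in\Lambda_N}\Delta u_s(i)$ and $M^N_t:=\sum_{i\in\Lambda_N}\int_0^t\sqrt{b\,u_s(i)v_s(i)}\,\dx W_s(i)$. As $N\to\infty$ the left-hand side increases to $\bar u_t$ and the first term to $\bar u_0$. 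Since $u_s\in\ell^1$ a.s., $\sum_{i\in\bbZ^d}\Delta u_s(i)=0$, so $b^N_s$ is a boundary term supported near $\partial\Lambda_N$; using $\bbE|u_s(j)|=P_su_0(j)$, the tightness of $\{Z_s:s\le t\}$, and $\sum_j u_0(j)<\infty$, one shows $\sup_{s\le t}\bbE|b^N_s|\to0$, so $\int_0^t b^N_s\,\dx s\to0$ in $L^1$. For $\rho=1$ the noises $\{W(i)\}_{i\in\bbZ^d}$ are one-dimensional and independent across sites, so the $M^N$ are continuous local martingales with $[M^N]_t=b\sum_{i\in\Lambda_N}\int_0^t u_s(i)v_s(i)\,\dx s\uparrow b\int_0^t\langle u_s,v_s\rangle\,\dx s$ (monotone, as $u,v\ge0$). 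Granting the bound of the next paragraph, $\bbE\big[[M^N-M^{N'}]_t\big]\to0$, so by Doob's inequality $(M^N)$ is Cauchy in $L^2$ on compact time intervals; its limit $M$ is a continuous square-integrable martingale with $[M]_t=b\int_0^t\langle u_s,v_s\rangle\,\dx s$, and letting $N\to\infty$ above gives $\bar u_t=\bar u_0+M_t$. Finally, a Lévy/Dambis--Dubins--Schwarz argument --- set $\tilde W_t:=\int_0^t\mathbbm{1}_{\{\langle u_s,v_s\rangle>0\}}(b\langle u_s,v_s\rangle)^{-1/2}\,\dx M_s$ and adjoin an independent Brownian motion on the set $\{\langle u_s,v_s\rangle=0\}$, which contributes nothing to $M$ --- produces a Brownian motion $\tilde W$ with $[\tilde W]_t=t$ and $\dx\bar u_t=\sqrt{b\langle u_t,v_t\rangle}\,\dx\tilde W_t$; by the first paragraph $\dx\bar v_t=\dx\bar u_t$, so the same $\tilde W$ works for $\bar v$.

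The one real estimate needed is $\bbE\!\int_0^t\langle u_s,v_s\rangle\,\dx s<\infty$ for every $t$. I would split $\langle u_s,v_s\rangle=\langle u_s,u_s\rangle+\langle u_s,\eta_s\rangle$. For the cross term, $|\langle u_s,\eta_s\rangle|\le\|\eta_s\|_\infty\,\bar u_s\le\|\eta_0\|_{\ell^1}\,\bar u_s$ (since $P_s$ is an $\ell^\infty$-contraction and $\ell^1\subset\ell^\infty$), so its time integral has expectation at most $t\|\eta_0\|_{\ell^1}\bar u_0$. For $\langle u_s,u_s\rangle=\sum_i u_s(i)^2$, apply Itô's formula to $u_s(i)^2$ via~\eqref{e:sde1}, sum over a finite box, and take expectations; the discrete Dirichlet form is non-positive, $\langle u_s,\Delta u_s\rangle=-\tfrac1{4d}\sum_{i\sim j}(u_s(i)-u_s(j))^2\le0$, and $\sum_i\big(b\,u_s(i)^2+b\,u_s(i)\eta_s(i)\big)=b\langle u_s,v_s\rangle\le b\langle u_s,u_s\rangle+b\|\eta_0\|_{\ell^1}\bar u_s$, which leads --- first for the process stopped at $\tau_R:=\inf\{s:\bar u_s\ge R\}$, then $R\to\infty$ by Fatou (note $\tau_R\to\infty$ a.s.\ since a non-negative continuous local martingale does not explode) --- to
\[
\frac{\dx}{\dx s}\,\bbE[\langle u_s,u_s\rangle]\le b\,\bbE[\langle u_s,u_s\rangle]+b\,\|\eta_0\|_{\ell^1}\,\bar u_0.
\]
Grönwall's inequality then gives $\bbE[\langle u_s,u_s\rangle]\le\big(\langle u_0,u_0\rangle+sb\|\eta_0\|_{\ell^1}\bar u_0\big)e^{bs}<\infty$, finite since $E_{\text{fin}}\subset\ell^2$. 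Integrating in $s$ yields the claimed bound, hence $\bbE\big[[M]_t\big]<\infty$ and $\bar u,\bar v$ are genuine square-integrable martingales.

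The main obstacle is the bookkeeping in this double limiting procedure --- simultaneously controlling the boundary term $b^N_s$ and the $L^2$-convergence $M^N\to M$ --- together with the mild circularity in the second-moment computation for $\langle u_s,u_s\rangle$, where one needs an a priori integrability statement merely to differentiate under the sum and to treat the Itô correction term as a true martingale. I would resolve the latter by carrying out that step on the processes stopped at $\tau_R$, where all the relevant sums are bounded by $R^2$, and only afterwards removing the truncation by monotone convergence; alternatively one may import the $p$-th moment bounds for SBM already available in the literature. (As a cross-check, the bare martingale property of $\bar u$ can also be seen directly: $\bar u$ is a non-negative continuous local martingale, hence a supermartingale, and $\bbE[\bar u_t]=\bar u_0$ is constant by Proposition~3.1 of~\cite{BDE11}, so it is a true martingale.)
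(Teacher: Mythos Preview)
The paper does not supply its own proof of this proposition; it is quoted from Proposition~3.2 of \cite{BDE11}, so there is no in-paper argument to compare against.

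Your argument is correct and follows the natural route for such statements: truncate to finite boxes, show that the Laplacian contribution is a vanishing boundary flux in $L^1$, identify the limiting continuous $L^2$-martingale $M$ and its bracket via $L^2$-Cauchy convergence of the $M^N$, and then represent $M$ through a single Brownian motion. Your use of $\rho=1$ via the deterministic difference $\eta_t=v_t-u_t$ to collapse the problem to $\bar u$ alone is exactly the right observation in this paper's setting, and it is what forces the \emph{same} $\tilde W$ to drive both $\bar u$ and $\bar v$. The Gr\"onwall bound on $\bbE\langle u_s,u_s\rangle$ obtained from It\^o's formula for $u_s(i)^2$ together with the sign of the discrete Dirichlet form is a clean way to secure square integrability, and the stopping-time localization you describe is the standard device to make that step rigorous. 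The only two places I would tighten are: (i) the uniform-in-$s$ estimate $\sup_{s\le t}\bbE|b^N_s|\to 0$, which deserves one explicit line (split the mass of $u_0$ inside/outside $\Lambda_{N/2}$ and use the sub-Gaussian tail of the rate-$1$ walk on $[0,t]$); and (ii) the identification $[M]_t=b\int_0^t\langle u_s,v_s\rangle\,\dx s$ from $[M^N]_t\uparrow b\int_0^t\langle u_s,v_s\rangle\,\dx s$, which is routine once the $L^2$-Cauchy property is in hand but should be stated. Neither is a genuine gap.
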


The lemma also implies that 
\begin{equation}
 [\bar{u},\bar{v}]_t=b\int_0^t \langle u_s,v_s\rangle\,\dx s\,, t\geq0. 
\end{equation}
Another consequence of the lemma is that by the martingale convergence theorem, 
the limits 
\begin{equation}
 \lim_{t\to\infty}\bar{u}_t=\bar{u}_\infty\quad\text{and}\quad\lim_{t\to\infty}
\bar{v}_t =\bar{v}_\infty\,,
\end{equation}
exist, almost surely.

What happens in the case of flat initial conditions, that is, for $(u_0,v_0)=
(\boldsymbol{\theta}_1,\boldsymbol{\theta}_2)$ with 
$\theta_1,\theta_2\geq0$? In this case, the existence of limit of $(u_t, v_t)$ in $E_{\text{tem}}^2$  is known for the case of $\rho\in (-1,1)$ (see Proposition 4.1 in \cite{BDE11}). 
As for the case of $\rho=1$, since expectations of $\langle u_t,\varphi_{\lambda}\rangle,  \langle u_t,\varphi_{\lambda}\rangle$ 
are constant in $t$ and thus bounded for any $\lambda>0$, we can immediately get that the family $\{(u_t, v_t)\,, t\geq 0\}$ is tight in 
$E_\textup{tem}^2$ (see Lemma~2.3(c) of~\cite{DP98} and its proof for analogous argument). Thus there exist weak  limit points $(u_\infty, v_\infty)\in E_\textup{tem}^2$. 

\begin{definition}[coexistence]
\begin{enumerate}
\item Assume that $(u_0,v_0)\in E_\textup{fin}^2$. 
We say that coexistence is 
possible if 
 \begin{equation}
  \bbP\{\bar{u}_\infty\bar{v}_\infty>0\}>0\,.
 \end{equation}
 We say that coexistence is impossible if
 \begin{equation}
  \bbP\{\bar{u}_\infty\bar{v}_\infty>0\}=0\,.
 \end{equation}
\item Assume that $(u_0,v_0)=
(\boldsymbol{\theta}_1,\boldsymbol{\theta}_2)$ with 
$\theta_1,\theta_2\geq0$.
   We 
say that coexistence is possible if for any  weak limit point $(u_\infty, v_\infty)$ of $\{(u_t, v_t)\,, t\geq 0\}$ there exists $(\phi,\psi)\in E_\textup{cpt}^2$ 
such that
 \begin{equation}
  \bbP\{\langle u_\infty,\phi\rangle\langle 
v_\infty,\psi\rangle >0\}>0\,.
 \end{equation}
 We say that coexistence is impossible if for any  weak limit point $(u_\infty, v_\infty)$ of $\{(u_t, v_t)\,, t\geq 0\}$  and for all $\phi,\psi\in E_\textup{cpt}$
 \begin{equation}
  \bbP\{\langle u_\infty,\phi\rangle\langle 
v_\infty,\psi\rangle >0\}=0\,.
 \end{equation}
\end{enumerate}
In the case of the PAM where the two populations are identical we use the 
notion of survival instead of coexistence.
\end{definition}

\subsection{Longtime behavior of 
PAM}\label{s:overview}

Before stating our main results for SBM with $\rho=1$, 
 we present  results on the survival/extinction  dichotomy for the PAM, that is, for the SBM$(1,b,u_0, v_0=u_0)$. 
In what follows,  we will use for this process the short notation PAM($b,u_0$) and drop the parameter $b$ 
or the initial condition whenever no confusion may occur. 

Recall that $g(\cdot,\cdot)$ 
denotes the Green's function of a simple random walk on $\bbZ^d$ and define
\begin{equation}
 b_2:=\frac2{g(0,0)}\,.
\end{equation}
Then the result on the longtime behavior for the PAM reads as follows.

\begin{theorem}[longtime behavior of PAM 
\cite{GDH07,BS10,BS11}]\label{t:DGH}
Let 
$b>0$, and $(u_t)_{t\geq 0}$ be PAM($b,u_0$). 
\begin{enumerate}[(i)]
\item\label{i:flat} Assume that $u_0=\boldsymbol\theta>0$. 
Then 
\begin{enumerate}[(a)]
 \item if $d\in\{1,2\}$, then 
survival is impossible.
 \item if $d\geq3$, then there exists $b_\ast> b_2$ such that 
for $b<b_\ast$, survival is possible and for $b>b_\ast$ survival is 
impossible.
 \end{enumerate}
 \item Assume that $u_0\in E_\textup{cpt}$. Then survival is possible if $d\geq 3$ and $b<b_\ast$, and survival is impossible if $d=1,2$ or $d\geq 3$ and $b>b_\ast$.  
\end{enumerate}
\end{theorem}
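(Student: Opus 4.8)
The plan is to deduce the statement from the known analysis of the linear parabolic Anderson model in \cite{GDH07,BS10,BS11}, and to reduce the two parts of the theorem to each other. First I would record that, since $\rho=1$ forces $v_t=u_t$, the system \eqref{SBM} defining $\mathrm{PAM}(b,u_0)$ collapses to the single linear equation
\[
 \dx u_t(i)=\Delta u_t(i)\,\dx t+\sqrt{b}\,u_t(i)\,\dx W_t(i),\qquad i\in\bbZ^d,\ t\ge0,
\]
driven by an independent field of white noises. Linearity then gives the moment formula $\bbE[u_t(i)u_t(j)]=\bbE_{i,j}\big[u_0(\xi^1_t)\,u_0(\xi^2_t)\,e^{bL_t}\big]$, where $\bbE_{i,j}$ is expectation over two independent rate-$1$ simple random walks $\xi^1,\xi^2$ started at $i,j$ and $L_t=\int_0^t\mathbbm{1}\{\xi^1_s=\xi^2_s\}\,\dx s$ is their collision local time. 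Taking $u_0=\boldsymbol\theta$ and Laplace-transforming in $t$, one reads off that $\sup_{t\ge0}\bbE[u_t(0)^2]<\infty$ precisely when $b\,G(0,0)<1$, with $G(0,0)=\tfrac12 g(0,0)$ the Green's function at $0$ of the difference walk; equivalently $b<b_2=2/g(0,0)$, while for $b>b_2$ (and always for $d\le2$, where $g(0,0)=\infty$) the local second moment grows exponentially in $t$. So $b_2$ is the second-moment threshold, and the remainder of the proof is about locating the true survival threshold relative to it.

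\emph{Recurrent case ($d\in\{1,2\}$).} By the above the local second moment is unbounded for every $b>0$, while the first moment $\bbE[u_t(i)]=P_tu_0(i)$ stays bounded; the field develops unboundedly large local fluctuations, $u_t(i)\to0$ in probability for each fixed $i$, and for $u_0\in E_\textup{fin}$ one has $\bar u_\infty=0$ a.s. The rigorous versions of these statements form the recurrent part of the main results of \cite{GDH07}, which gives (i)(a) and the $d\le2$ half of (ii).

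\emph{Transient case ($d\ge3$).} For flat $u_0=\boldsymbol\theta$, \cite{GDH07} establishes the dichotomy: there is a critical $b_\ast\in(0,\infty)$ with every weak limit point $(u_\infty,u_\infty)$ non-degenerate for $b<b_\ast$ (survival possible) and $u_t\to0$ locally for $b>b_\ast$ (survival impossible); the strict inequality $b_\ast>b_2$ is then supplied by \cite{BS10,BS11}. This is (i)(b). For $u_0\in E_\textup{cpt}$ I would reduce to the flat case by linearity: writing $u_0=\sum_{j\in\supp u_0}u_0(j)\,\mathbbm{1}_{\{j\}}$ and letting $u^{(j)}$ be the solution from $\mathbbm{1}_{\{j\}}$ driven by the same noise, all $u^{(j)}\ge0$ and $\bar u_t=\sum_j u_0(j)\bar u^{(j)}_t$, so by translation invariance survival from a compactly supported initial condition is equivalent to survival from a single unit mass; a comparison of point and flat initial data (part of the cited analysis) then identifies the threshold for the non-negative martingale $\bar u^{(0)}$ of Proposition \ref{l:totalmass} to be uniformly integrable with the same $b_\ast$, giving $\bbP(\bar u^{(0)}_\infty>0)>0$ for $b<b_\ast$ and $\bar u^{(0)}_\infty=0$ a.s.\ for $b>b_\ast$, hence the $d\ge3$ half of (ii).

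The main obstacle is two-fold. First, the bookkeeping: one must check that the notion of survival used here --- non-degeneracy of every weak limit point tested against $E_\textup{cpt}$ for flat data, and $\bbP(\bar u_\infty>0)>0$ for $u_0\in E_\textup{fin}$ --- matches the formulations in \cite{GDH07,BS10,BS11}, and that the critical constant in (i) and in (ii) is literally the same $b_\ast$. Second, and the genuinely hard input, is the strict inequality $b_\ast>b_2$: the second-moment method yields survival only for $b<b_2$, and ruling out that extinction already begins at $b_2$ requires the refined pathwise (size-biasing / rare-event) estimates of \cite{BS10,BS11} rather than any elementary moment computation.
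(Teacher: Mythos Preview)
Your treatment of part~(i) is correct and matches the paper: both simply defer to the cited results in \cite{GDH07,BS10,BS11}, with the strict inequality $b_\ast>b_2$ supplied by the latter two references.

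For part~(ii) you take a different route. The paper uses the self-duality of the PAM (Lemma~\ref{l:selfdual}): for $u_0\in E_\textup{cpt}$ and a flat-started PAM $\tilde u$ with $\tilde u_0=\boldsymbol\theta$,
\[
 \bbE_{u_0}\bigl[e^{-\theta\bar u_t}\bigr]=\bbE_{\boldsymbol\theta}\bigl[e^{-\langle u_0,\tilde u_t\rangle}\bigr],
\]
so the Laplace transform of $\bar u_\infty$ is read off directly from the limiting law of $\tilde u_t$, which is known from part~(i). This handles both directions (survival for $b<b_\ast$, extinction for $b>b_\ast$) in one stroke and automatically identifies the thresholds.

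You instead decompose $u_0$ by linearity into point masses and reduce to the single-site initial condition $u^{(0)}$. That reduction is fine, but the step you label ``a comparison of point and flat initial data (part of the cited analysis)'' is exactly where the content lies, and you do not say what the comparison is. The results of \cite{GDH07} are formulated for flat initial data; they contain no direct statement about uniform integrability of $\bar u^{(0)}_t$ or about $\bar u^{(0)}_\infty=0$ a.s. The cleanest bridge is precisely the self-duality identity $\bar u^{(0)}_t\eqd \tilde u_t(0)$ (take $\phi=\mathbbm{1}_{\{0\}}$ in Lemma~\ref{l:selfdual}), which transfers local survival/extinction of the flat-started process to total-mass survival/extinction of the point-mass one. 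Without invoking this, your argument has a gap at the crucial point---most visibly for $b>b_\ast$, where you need $\bar u^{(0)}_\infty=0$ a.s., and the naive domination $u^{(0)}_t(i)\le u^{\mathrm{flat}}_t(i)$ gives only local, not total-mass, extinction.
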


\begin{remark}
In fact Greven and den Hollander \cite{GDH07} prove only $b_2\leq b_\ast$. They 
conjectured that in fact  $b_2<b_\ast$. The fact that   $b_2<b_\ast$
follows from results of Birkner and Sun \cite{BS10} for dimensions $d\geq4$
and \cite{BS11} for dimension $d=3$. \cite{GDH07} 
also show that the second moments of the PAM at each site 
$i\in\bbZ^d$ are bounded in time if and only if $b<b_2$. 
\end{remark}

\begin{proof}[Proof of Theorem \ref{t:DGH}]
 For flat initial conditions, the result is contained in Theorems 1.2, 1.3, 1.4,
and 1.5 of \cite{GDH07}. For summable initial conditions the longtime behavior 
can be carried over from the flat setting using the self-duality from Lemma~\ref{l:selfdual} as follows: for all $\theta>0$ and $u_0\in E_{\rm cpt}$ 
\begin{align}\label{e:selfdual}
 \bbE_{u_0}[e^{-\theta \bar{u}_\infty}]&=
 \lim_{t\to\infty}\bbE_{u_0}[e^{-\theta \langle \mathbf{1},u_t\rangle}]
 =\lim_{t\to\infty}\bbE_{\boldsymbol\theta}[ e^{-\tilde u_t ,u_0\rangle}]\,,
\end{align}
where $(\tilde u_t)_{t\geq 0}$ is  PAM($b,{\boldsymbol\theta}$). 
By Theorem 1.3 and 1.4 of \cite{GDH07}, the limiting law of $\tilde u_t$ exists, it is 
translation invariant and this implies that the right hand side of 
\eqref{e:selfdual} equals $1$ if and only if $\wlim_{t\to\infty} \tilde u_t=\mathbf{0}$. Here and elsewhere $\wlim$ denotes weak (in probability sense) limit. 
\end{proof}


\subsection{Our Main Result}\label{s:mainresult}



 Recall the parameter $b_\ast$ from 
Theorem \ref{t:DGH}. We establish  the longtime behavior of the SBM for  the 
case  of  completely positive correlations ($\rho=1$). 
Recall that, as we mentioned above, if it is not stated otherwise, we assume that $\rho=1$ in what follows. 
\begin{theorem}[Longtime behavior for  summable initial conditions]\label{t:main}
 Let 
  $(u_0,v_0)\in E_\textup{fin}^2$ such 
that $\bar{u}_0\bar{v}_0>0$. 
\begin{enumerate}[(i)]
 \item\label{i:sumcoex} 
Let $d\geq3$. 
Then 
for all $b\in (0, b_\ast)$
coexistence for SBM($1,b,u_0,v_0$) is 
possible. 
 \item\label{i:sumnoncoex} 
Let 
\begin{itemize}
\item[(a)] $d\in\{1,2\}$ and $b>0$, 
\end{itemize}
or
\begin{itemize}
\item[(b)]
  $d\geq3$ and  $b>b_\ast$. 
\end{itemize}
Then coexistence  for SBM($1,b,u_0,v_0$)  
is impossible. Moreover, in both cases (a) and (b), if $\bar{u}_0\leq \bar{v}_0$, then 
\begin{equation}
 \bar u_t \to 0\,.
\end{equation}
and 
\begin{equation}
 \bar v_t \to  \bar v_0- \bar u_0\,.
\end{equation}
almost surely, as $t\to\infty$. 
\end{enumerate}
\end{theorem}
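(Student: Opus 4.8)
The plan is to reduce everything to the known behavior of the PAM via the observation, already used in the existence/uniqueness proof, that $\eta_t := v_t - u_t$ is deterministic and solves the discrete heat equation. Write $(u,v) = \mathrm{SBM}(1,b,u_0,v_0)$ and assume WLOG $\bar u_0 \le \bar v_0$, so that $\eta_0 = v_0 - u_0 \in E_{\mathrm{fin}}$ has $\bar\eta_0 = \bar v_0 - \bar u_0 \ge 0$; note $\eta$ need not be nonnegative pointwise. Since $\eta_t = P_t \eta_0$ is deterministic, we have $v_t = u_t + \eta_t$, hence the total-mass martingale $\bar u_t$ from Proposition \ref{l:totalmass} satisfies
\begin{equation}
 \mathrm{d}\bar u_t = \sqrt{b\langle u_t, u_t + \eta_t\rangle}\,\mathrm{d}\tilde W_t,
\end{equation}
and $\bar v_t = \bar u_t + \bar\eta_t$ where $\bar\eta_t \equiv \bar\eta_0 = \bar v_0 - \bar u_0$ is constant in $t$ (the discrete Laplacian preserves total mass for summable configurations). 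By the martingale convergence theorem $\bar u_t \to \bar u_\infty$ a.s., and therefore $\bar v_t \to \bar u_\infty + (\bar v_0 - \bar u_0)$ a.s. So the whole theorem comes down to identifying $\bar u_\infty$: part (i) asks us to show $\mathbb{P}\{\bar u_\infty > 0\} > 0$ when $d \ge 3$, $b < b_\ast$, and part (ii) asks us to show $\bar u_\infty = 0$ a.s. in the complementary regimes.

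For part (ii), the key is a \emph{domination} argument: I would compare $u_t$ with the PAM started from $u_0$. Consider $(\tilde u_t)_{t\ge 0} = \mathrm{PAM}(b, u_0)$, i.e. the solution of $\mathrm{d}\tilde u_t(i) = \Delta \tilde u_t(i)\,\mathrm{d}t + \sqrt{b}\,\tilde u_t(i)\,\mathrm{d}W_t^u(i)$. Both $u$ and $\tilde u$ satisfy a locally one-dimensional SDE of the form $\mathrm{d}w_t(i) = \Delta w_t(i)\,\mathrm{d}t + \sigma(t,i,w_t(i))\,\mathrm{d}W_t^u(i)$, with diffusion coefficients $\sqrt{b w_t(i)^2 + b w_t(i)\eta_t(i)}$ for $u$ (from \eqref{e:sde1}) and $\sqrt{b w_t(i)^2}$ for $\tilde u$. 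Since $\eta_t(i) = P_t(v_0 - u_0)(i)$ need not be $\ge 0$ pointwise, a direct comparison is delicate; the cleanest route is instead to note $u_t(i) v_t(i) = u_t(i)^2 + u_t(i)\eta_t(i) \le (u_t(i) + |\eta_t(i)|)^2$ and more importantly to work on the level of $\langle u_t, v_t\rangle = \langle u_t, u_t + \eta_t\rangle$. A better comparison: couple $\bar u$ against the total mass of $\mathrm{PAM}(b, u_0 + v_0)$ or use the self-duality of Lemma \ref{l:selfdual} directly. Concretely, I expect the argument in \cite{BDE11} (their proof that coexistence is impossible in the recurrent case for $\rho \in (0,1)$) to adapt: one shows $\mathbb{E}_{u_0}[e^{-\theta\bar u_\infty}] = \lim_t \mathbb{E}_{u_0}[e^{-\theta\langle \mathbf 1, u_t\rangle}]$ and uses a self-duality relating this to $\mathbb{E}_{\boldsymbol\theta}[e^{-\langle \tilde u_t, u_0\rangle}]$ where now $\tilde u$ is the SBM with flat initial data $\boldsymbol\theta$ and initial difference chosen appropriately — but for $\rho = 1$ the self-dual partner is again a PAM-type object (since the difference process is deterministic), so Theorem \ref{t:DGH}(i) applies: in dimensions $1,2$, or in $d \ge 3$ with $b > b_\ast$, the flat PAM converges weakly to $\mathbf 0$, which forces $\mathbb{E}[e^{-\theta \bar u_\infty}] = 1$, i.e. $\bar u_\infty = 0$ a.s. Feeding this back, $\bar v_t \to \bar v_0 - \bar u_0$ a.s., and since $\bar u_\infty = 0$ forces $\langle u_\infty, \phi\rangle = 0$ for the relevant limit points (any subsequential limit of $u_t$ has zero total mass, hence is $\mathbf 0$ as an $E_{\mathrm{fin}}$-element — here one must be careful that $\bar u_t \to 0$ plus tightness gives $u_t \to \mathbf 0$ in $E_{\mathrm{fin}}$, which is fine since $u_t \ge 0$), coexistence is impossible.

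For part (i), $d \ge 3$ and $b < b_\ast$: I want to show $\mathbb{P}\{\bar u_\infty > 0\} > 0$. Again via self-duality, $\mathbb{E}_{u_0}[e^{-\theta\bar u_\infty}] = \lim_t \mathbb{E}_{\boldsymbol\theta}[e^{-\langle \tilde u_t, u_0\rangle}]$ with $\tilde u$ a PAM-type process started flat (the dual only sees $u_0$ because the $\eta$-part is deterministic and the duality function is exponential); since $b < b_\ast$, Theorem \ref{t:DGH}(i)(b) says survival is possible, so $\wlim_t \tilde u_t \neq \mathbf 0$ — more precisely the limiting law is translation invariant with positive mean, so $\lim_t \mathbb{E}_{\boldsymbol\theta}[e^{-\langle \tilde u_t, u_0\rangle}] < 1$, giving $\mathbb{P}\{\bar u_\infty > 0\} > 0$. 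Then $\bar v_\infty = \bar u_\infty + (\bar v_0 - \bar u_0) \ge \bar u_\infty > 0$ on that event, hence $\mathbb{P}\{\bar u_\infty \bar v_\infty > 0\} > 0$, i.e. coexistence is possible.

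\textbf{Main obstacle.} The delicate point is justifying the self-duality step when the initial conditions are genuinely non-identical and $\eta$ changes sign: I need a clean self-duality statement (the forward reference to Lemma \ref{l:selfdual}) that applies to $\mathrm{SBM}(1, b, u_0, v_0)$ rather than just to the PAM, and I must verify that the dual process, though driven by the same SDE with a deterministic linear drift correction coming from $\eta$, still has its longtime law controlled by Theorem \ref{t:DGH} (equivalently, that $\langle \tilde u_t, u_0\rangle \to 0$ in probability iff $b \ge b_\ast$ or $d \le 2$, uniformly enough to pass to the Laplace-transform limit). A secondary technical point is the passage from "$\bar u_t \to 0$ a.s." to "coexistence impossible" in sense (1) of the definition, and, in the flat-initial-condition discussion that motivates part of the setup, the identification of weak limit points — but with summable data and the martingale convergence already in hand, this is routine once the duality is secured.
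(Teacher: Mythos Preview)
Your overall reduction via $\eta_t = v_t - u_t$ and the observation $\bar v_\infty = \bar u_\infty + (\bar v_0 - \bar u_0)$ are exactly right, so the whole theorem does reduce to analyzing $\bar u_\infty$. But both of your proposed routes to identify $\bar u_\infty$ rely on a self-duality for $\mathrm{SBM}(1,b,u_0,v_0)$ that is \emph{not available}: Lemma~\ref{l:selfdual} is a statement about the PAM only, and $u$ is not a PAM when $\eta \not\equiv 0$. Your own ``main obstacle'' paragraph is correct---this is a genuine gap, not a technicality---and the paper takes different routes in both parts.

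For part~(i) the paper avoids duality altogether. It proves a comparison (Proposition~\ref{p:momentcomp}) between $\langle u_t + v_t, \mathbf{1}\rangle$ and the total mass of $\mathrm{PAM}(b, u_0+v_0)$, valid for all nonnegative nondecreasing convex test functions. Combined with the known uniform integrability of the PAM total mass for $b<b_\ast$ (via Lemma~\ref{l:selfdual} plus \cite{GDH07}), this gives uniform integrability of $\{\bar u_t\}$. Then $\mathbb{E}[\bar u_\infty] = \bar u_0 > 0$ immediately yields $\mathbb{P}\{\bar u_\infty > 0\} > 0$, and your closing line finishes. So the missing ingredient is the comparison-based UI, not a new duality.

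For part~(ii) the gap is more serious: you correctly note that comparison with $\mathrm{PAM}(b,u_0)$ fails when $\eta$ changes sign, and your fallback to self-duality fails for the same reason as above. The paper's key idea, which your proposal does not contain, is the decomposition
\[
u_t = w_t + \eta_t^-,\qquad w_t := \min\{u_t, v_t\}.
\]
One shows separately that $\langle \eta_t^-, \mathbf{1}\rangle \to 0$ (a purely deterministic heat-equation fact, Proposition~\ref{l:HE}, using $\bar\eta_0 \ge 0$) and that $\langle w_t, \mathbf{1}\rangle \to 0$ in probability. The latter is the real work: $w$ is a semimartingale with a nondecreasing finite-variation part $q$ coming from $\eta^-$, and crucially $w_t(i)^2 \le u_t(i) v_t(i)$, so the diffusion coefficient of $w$ dominates that of a PAM in the right direction for an \emph{approximate} duality. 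One then runs the standard duality-function calculation (via the formula from \cite[Lemma~4.4.10]{EK86}) against a flat PAM $\tilde w$, and the error terms are controlled by $\bar q(\infty) - \bar q(T^\ast)$, which can be made arbitrarily small. Extinction of the flat PAM from Theorem~\ref{t:DGH} then forces $\langle w_t, \mathbf{1}\rangle \to 0$. This min-process decomposition and the resulting approximate duality are the essential ideas your proposal is missing.
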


\begin{theorem}[Longtime behavior for flat initial conditions]\label{t:main2}
	Assume that  $(u_0,v_0)=
	(\boldsymbol{\theta}_1,\boldsymbol{\theta}_2)$ with 
	$\theta_1, \theta_2>0$. 
	\begin{enumerate}[(i)]
		\item\label{i:sumcoex_1} 
Let $d\geq3$. 
Then 
		for all $b\in (0,b_\ast)$
		coexistence for SBM($1,b,u_0,v_0$) is 
		possible. 
		\item\label{i:sumnoncoex}
Let 
\begin{itemize}
\item[(a)] $d\in\{1,2\}$ and $b>0$, 
\end{itemize}
or
\begin{itemize}
\item[(b)]
  $d\geq3$ and  $b>b_\ast$. 
\end{itemize}
Then coexistence  for SBM($1,b,u_0,v_0$)  is impossible.
  Moreover, in both cases (a) and (b), if $\theta_1\leq \theta_2$, then for any $\phi \in E_\textup{cpt}$
		\begin{equation}
		\langle u_t, \phi\rangle\to 0\,,
		\end{equation}
and 
\begin{equation}
		\langle v_t, \phi\rangle\to (\theta_2-\theta_1)\langle 1, \phi\rangle \,,
		\end{equation}
		in probability as $t\to\infty$. 
	\end{enumerate}
\end{theorem}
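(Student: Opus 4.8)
The plan is to reduce the flat-initial-condition case to the summable case of Theorem \ref{t:main}, exploiting the structure of SBM with $\rho=1$ and the translation-invariance of the limit law of the PAM already recorded in Theorem \ref{t:DGH}. The key structural fact, valid throughout since $\rho=1$, is that $\eta_t := v_t - u_t$ is \emph{deterministic} and solves the discrete heat equation $\dx\eta_t = \Delta\eta_t\,\dx t$; with flat initial data $\eta_0 = \boldsymbol{\theta_2-\theta_1}$ this forces $\eta_t \equiv \boldsymbol{\theta_2-\theta_1}$ for all $t$. Hence $v_t = u_t + \boldsymbol{\theta_2-\theta_1}$ for all $t$, and the whole problem is reduced to understanding the single process $u$, which solves the one-dimensional-at-each-site SDE \eqref{e:sde1} with constant-in-time coefficient $\eta_t(i) = \theta_2-\theta_1$. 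In the symmetric case $\theta_1=\theta_2$ this is literally the PAM($b,\boldsymbol{\theta_1}$) and both parts are immediate from Theorem \ref{t:DGH}(i); so the content is in the asymmetric case, which I treat by a domination/comparison argument between $u$ and a genuine PAM.

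For part \eqref{i:sumnoncoex} (non-coexistence, $\theta_1\le\theta_2$), the first step is to show $\langle u_t,\phi\rangle \to 0$ in probability for every $\phi\in E_\textup{cpt}$. Compare $u$ with $\tilde u := $ PAM($b,\boldsymbol{\theta_1}$) driven by the same noise: $\tilde u$ satisfies $\dx\tilde u_t(i) = \Delta\tilde u_t(i)\dx t + \sqrt{b}\,\tilde u_t(i)\,\dx W_t(i)$, while $u$ has the strictly larger diffusion coefficient $\sqrt{bu_t(i)^2 + bu_t(i)(\theta_2-\theta_1)} \ge \sqrt{b}\,u_t(i)$. The larger branching rate should only help extinction, so I expect $u$ to be dominated in an appropriate stochastic sense by a PAM with branching parameter $\ge b$; since $d\in\{1,2\}$ (any $b$) or $d\ge3$ with $b>b_\ast$ both give survival impossible for the PAM by Theorem \ref{t:DGH}(i), we conclude $\langle u_t,\phi\rangle\to 0$. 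Making this comparison rigorous is the main obstacle: one cannot compare the two SDEs pathwise coordinate-by-coordinate because of the Laplacian coupling, so instead I would run the moment-duality / self-duality machinery. Concretely, following the route in the proof of Theorem \ref{t:DGH}, I would establish a self-duality for $u$ analogous to Lemma~\ref{l:selfdual}: for $\psi\in E_\textup{cpt}$,
\begin{equation}
\bbE_{\boldsymbol{\theta_1}}\!\big[e^{-\langle u_t,\psi\rangle}\big] = \bbE_{\psi}\!\big[e^{-\langle \hat u_t,\boldsymbol{\theta_1}\rangle}\big],
\end{equation}
where $\hat u$ is the summable-initial-condition companion process, and then invoke Theorem \ref{t:main}\eqref{i:sumnoncoex}(b) (resp. (a)) applied to $\hat u$ started from $\psi\in E_\textup{fin}$ to get $\langle\hat u_t,\boldsymbol{\theta_1}\rangle = \theta_1\,\overline{\hat u}_t \to 0$ a.s. (here using $\overline{\hat u}_0 \le \overline{\hat v}_0$ is automatic when the offset $\theta_2-\theta_1\ge0$ is built into $\hat v_0 = \hat u_0 + \boldsymbol{\theta_2-\theta_1}$ — one must check the offset is summable, which it is not, so a slightly more careful truncation of the offset is needed; this is the delicate bookkeeping point). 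Passing the Laplace-transform limit back gives $\langle u_t,\psi\rangle\to 0$ in probability, and since weak limit points $(u_\infty,v_\infty)$ exist by the tightness argument already noted, $\langle u_\infty,\phi\rangle = 0$ a.s. for all $\phi\in E_\textup{cpt}$, i.e. coexistence is impossible. The statement $\langle v_t,\phi\rangle \to (\theta_2-\theta_1)\langle\boldsymbol 1,\phi\rangle$ is then immediate from $v_t = u_t + \boldsymbol{\theta_2-\theta_1}$.

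For part \eqref{i:sumcoex_1} (coexistence possible, $d\ge3$, $b<b_\ast$), I would argue in the reverse direction. Again $v_t = u_t + \boldsymbol{\theta_2-\theta_1}$, so coexistence amounts to exhibiting $\phi,\psi\in E_\textup{cpt}$ with $\bbP\{\langle u_\infty,\phi\rangle \langle u_\infty + \boldsymbol{\theta_2-\theta_1},\psi\rangle > 0\} > 0$; since the second factor is automatically $\ge (\theta_2-\theta_1)\langle\boldsymbol 1,\psi\rangle > 0$ when $\theta_2>\theta_1$, it suffices to show $\bbP\{\langle u_\infty,\phi\rangle > 0\} > 0$ for some $\phi$, and when $\theta_1=\theta_2$ this is exactly survival of PAM($b,\boldsymbol{\theta_1}$), which holds for $b<b_\ast$ by Theorem \ref{t:DGH}(i)(b). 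For $\theta_2 > \theta_1$ I would observe that $u$ is \emph{stochastically larger} than PAM($b,\boldsymbol{\theta_1}$) — now the inequality on diffusion coefficients goes the favorable way only in that $u$ has more branching, which hurts, so instead I compare the \emph{other} way: $u_t(i) \ge$ the solution $\underline u$ of the pure-heat-flow-plus-PAM-noise equation is not quite right either. The cleaner argument is again via self-duality: $\bbP\{\langle u_\infty,\phi\rangle = 0 \text{ for all }\phi\} $ would force, through the dual relation, that the summable-data companion $\hat u$ started from arbitrary $\psi\in E_\textup{cpt}$ has $\overline{\hat u}_\infty = 0$ a.s., contradicting Theorem \ref{t:main}\eqref{i:sumcoex} which gives coexistence (hence $\bbP\{\overline{\hat u}_\infty > 0\} > 0$) for $b<b_\ast$. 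I expect the coexistence direction to be the easier of the two, with all the real work — the careful self-duality with a non-summable affine offset, and the justification of interchanging limit and expectation — concentrated in part \eqref{i:sumnoncoex}.
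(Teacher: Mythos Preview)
Your structural reductions are right and match the paper: with $\rho=1$ and flat data, $\eta_t\equiv\boldsymbol{\theta_2-\theta_1}$, so $v_t=u_t+\boldsymbol{\theta_2-\theta_1}$ and everything reduces to the single process $u$ solving
\[
\dx u_t(i)=\Delta u_t(i)\,\dx t+\sqrt{b\,u_t(i)\bigl(u_t(i)+(\theta_2-\theta_1)\bigr)}\,\dx W_t(i).
\]
The gap is in how you handle this reduced process.

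\textbf{Part \eqref{i:sumnoncoex}.} You correctly observe that the diffusion coefficient dominates that of PAM$(b)$, but you then misdiagnose the obstacle. You write that ``one cannot compare the two SDEs pathwise coordinate-by-coordinate because of the Laplacian coupling'' and propose to escape via a self-duality. In fact comparison theorems for \emph{interacting} diffusions, precisely designed to cope with the Laplacian coupling, are available: Theorem~2 of Greven--Klenke--Wakolbinger~\cite{GKW02} (building on Cox--Fleischmann--Greven~\cite{CFG96}) applies directly because $x\mapsto e^{-\langle x,\phi\rangle}$ has nonnegative mixed second partials, and yields
\[
\bbE_{(\boldsymbol{\theta}_1,\boldsymbol{\theta}_2)}\bigl[e^{-\langle u_t,\phi\rangle}\bigr]\;\ge\;\bbE_{\boldsymbol{\theta}_1}\bigl[e^{-\langle w_t,\phi\rangle}\bigr]
\]
for $w=\mathrm{PAM}(b,\boldsymbol{\theta}_1)$ and every $\phi\in E_\textup{cpt}$. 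Theorem~\ref{t:DGH}(i) sends the right side to $1$, hence $\langle u_t,\phi\rangle\to0$ in probability, and $\langle v_t,\phi\rangle\to(\theta_2-\theta_1)\langle\boldsymbol{1},\phi\rangle$ follows from $v_t=u_t+\boldsymbol{\theta_2-\theta_1}$. This is the paper's argument, and it is one paragraph.

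Your proposed workaround is not just unnecessary, it almost certainly does not work. The PAM self-duality of Lemma~\ref{l:selfdual} rests on the purely multiplicative noise $\sqrt{b}\,u\,\dx W$ and the scaling $u\mapsto\lambda u$ that it respects; your $u$ has the inhomogeneous coefficient $\sqrt{bu^2+bu(\theta_2-\theta_1)}$, which breaks this scaling, so there is no reason to expect a Laplace-functional self-duality pairing the flat process with a summable-data companion. You already notice the symptom---the offset $\boldsymbol{\theta_2-\theta_1}$ makes $\hat v_0$ non-summable---and this is a structural obstruction, not ``delicate bookkeeping''.

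\textbf{Part \eqref{i:sumcoex_1}.} Your reduction to showing $\bbP\{\langle u_\infty,\phi\rangle>0\}>0$ (using $v_\infty\ge u_\infty$) is exactly the paper's. But again you reach for an unavailable self-duality. The paper instead uses the \emph{other} direction of the Greven--Klenke--Wakolbinger comparison (Proposition~\ref{p:momentcomp}(ii)): the sum $u_t+v_t$ is dominated, in the sense of convex test functions, by PAM$(b,\boldsymbol{\theta_1+\theta_2})$, and since the latter is uniformly integrable against any $\phi\in E_\textup{fin}$ for $b<b_\ast$ (Theorems~1.3--1.4 of \cite{GDH07}), so is $\{\langle u_t,\phi\rangle:t\ge0\}$. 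Uniform integrability plus $\bbE[\langle u_t,\phi\rangle]=\theta_1\langle\boldsymbol{1},\phi\rangle$ gives $\bbE[\langle u_\infty,\phi\rangle]=\theta_1\langle\boldsymbol{1},\phi\rangle>0$, hence the desired positivity.

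In short: both halves are resolved by the convexity-comparison machinery for interacting diffusions, which you dismissed too quickly; the self-duality route you propose is blocked by the inhomogeneity of the diffusion coefficient.
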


Let us comment on the above results. First of all, as we see, the transition threshold between possible coexistence and non-coexistence for the SBM($1,b, u_0, v_0$) is exactly the same as for the PAM model which is, in fact,  SBM($1,b, u_0, v_0=u_0$) model. This may sound like as a pretty much  well expected result, however we found that the proof of it is surprisingly not very  straightforward. It does use close connection between 
SBM($1$) and PAM models, however in the regime of non-coexistence with summable {\it non-monotone} initial conditions (that is,  $u_0\not \leq v_0$ and  $v_0\not\leq u_0$)
on top of comparison with the PAM, one uses non-trivial decomposition of the SBM  and some interesting PDE results (see Section~\ref{s:noncoexsum} for this argument). We hope that our proofs  with give an additional motivation for studying the open question of existence of the phase transition in the transient regime for 
$\rho\in (0,1)$. 


\section{Proof of Theorems \ref{t:main}(i) and \ref{t:main2}(i): Coexistence 
Possible}\label{s:proofcoex}

\subsection{Preparations} 

In Section \ref{s:proofcoex} we prove the coexistence parts of Theorems~ 
\ref{t:main}, \ref{t:main2}. The actual proof will be carried out in Subsection 
\ref{s:proofcoexistfin}. In this subsection we will review and prove a couple 
of results for the PAM which we will need for the proof of Theorems 
\ref{t:main}(i), \ref{t:main2}(i). 

Let $w$ be PAM($b, w_0$), that is, $w$  satisfies the following equation:
\begin{equation}
\dx w_t(i)=\Delta w_t(i)\,\dx t+\sqrt{b w^2_t(i)}\,\dx W_t(i), \quad t\geq0,  i\in \bbZ^d. 
\end{equation}
The PAM exhibits the following simple but very usefull self-duality property, see 
Section 2 of Cox, Klenke and Perkins \cite{CKP00}: let $w$ and $\tilde{w}$ be 
PAM$(b)$ processes such that $w_0=\boldsymbol{\theta}$ and 
$\tilde{w}_0=\phi\in E_\textup{fin}$. Then
\begin{align}
  \bbE_{\tilde{w}_0}[e^{-\lambda\langle \tilde{w}_t,\boldsymbol{\theta}\rangle}]
  =\bbE_{\tilde{w}_0}[e^{-\langle 
\tilde{w}_t,\lambda\boldsymbol{\theta}\rangle}]
 =\bbE_{\lambda\boldsymbol\theta}[e^{-\langle \tilde{w}_0,u_t\rangle}]
  =\bbE_{\boldsymbol\theta}[e^{-\langle \tilde{w}_0,\lambda w_t\rangle}]
  =\bbE_{\boldsymbol\theta}[e^{-\lambda\langle \phi,w_t\rangle}]\,, \quad \forall t\geq0. 
 \end{align}
From the self-duality we immediately obtain the following statement.

\begin{lemma}[self-duality]\label{l:selfdual}
Let $w$ and $\tilde{w}$ be PAM(b) such that $w_0=\boldsymbol{\theta}$ and 
$\tilde{w}_0=\phi\in E_\textup{fin}$. 
Then,
\begin{equation}
 \langle \tilde{w}_t,\boldsymbol{\theta}\rangle\eqd\langle \phi,w_t\rangle\,,\;\forall t\geq 0. 
\end{equation} 
\end{lemma}



We now prove a result which allows us to bound the moments of appropriate functions of SBM by those of 
PAM. Since we know a lot of  information about  PAM this will be very useful.

\begin{proposition}[comparison]\label{p:momentcomp}
 Let $(u,v)$ be the solution of 
SBM$(1,b,u_0,v_0)$. Let $w$ be a PAM$(b,w_0)$ such that 
$w_0=u_0+v_0$. Let $\Phi(t)$  be arbitrary non-negative  non-decreasing convex function on $\bbR_+$.  
\begin{itemize}
\item[(i)] 
If  
 $(u_0,v_0)\in E_\textup{fin}$, then 
\begin{equation}\label{e:moment_summable1}
 \bbE_{(u_0,v_0)}[\Phi(\langle u_t,\boldsymbol{1}\rangle+\langle 
v_t,\boldsymbol{1}\rangle)]
 \leq\bbE_{w_0}[\Phi(\langle w_t,\boldsymbol{1}\rangle)], \quad \forall t\geq0.  
\end{equation}
\item[(ii)]
If $(u_0,v_0)=(\boldsymbol{\theta}_1,\boldsymbol{\theta}_2)$ for 
$\theta_1,\theta_2\geq0$, then for all $\phi\in E_\textup{fin}$ 
such 
that $\phi\geq0$ one has
\begin{equation}\label{e:momentbflat}
 \bbE_{(u_0,v_0)}[\Phi(\langle u_t,\phi\rangle+\langle v_t,\phi\rangle)]
 \leq\bbE_{w_0}[\Phi(\langle w_t,\phi\rangle)],  \quad \forall t\geq0. 
\end{equation}

\end{itemize}
\end{proposition}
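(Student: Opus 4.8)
The plan is to show that $s_t := u_t + v_t$ is a subsolution, in an appropriate sense, of the PAM equation, and then to invoke a comparison principle for the total-mass (or averaged) functionals. Adding the two SDEs in \eqref{SBM}, and using that both $u$ and $v$ are driven by the \emph{same} Brownian field $W(i)$ with $\rho=1$, one gets
\begin{equation}
\dx s_t(i)=\Delta s_t(i)\,\dx t+2\sqrt{b\,u_t(i)v_t(i)}\,\dx W_t(i),\quad i\in\bbZ^d.
\end{equation}
The key elementary inequality is $4u_t(i)v_t(i)\le (u_t(i)+v_t(i))^2=s_t(i)^2$, so the diffusion coefficient of $s_t(i)$ is dominated pointwise by $\sqrt{b\,s_t(i)^2}$, which is exactly the PAM diffusion coefficient with $w_0=u_0+v_0=s_0$. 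First I would make this precise by writing $2\sqrt{b\,u_t(i)v_t(i)} = \sigma_t(i)\sqrt{b\,s_t(i)^2}$ with an adapted process $\sigma_t(i)\in[0,1]$, so that $s$ solves a PAM-type equation with a damped noise coefficient.

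Next I would pass to the functional of interest. In case (i), summing over $i\in\bbZ^d$ (justified since $u_0,v_0\in E_\textup{fin}$ and, by Proposition~\ref{l:totalmass}, the total masses stay finite and integrable), the drift telescopes to zero and one obtains
\begin{equation}
\dx\langle s_t,\boldsymbol 1\rangle=\sqrt{b\,\langle s_t,\sigma_t^2 s_t\rangle}\,\dx B_t
\end{equation}
for a standard Brownian motion $B$, while $\bar w_t=\langle w_t,\boldsymbol 1\rangle$ solves $\dx\bar w_t=\sqrt{b\,\langle w_t,w_t\rangle}\,\dx\tilde B_t$. In case (ii) one instead pairs with $\phi\in E_\textup{fin}$, $\phi\ge 0$, and uses $\langle \Delta s_t,\phi\rangle=\langle s_t,\Delta\phi\rangle$; here the drift does not vanish, so one should work with $\langle s_t,P_{T-t}\phi\rangle$ (the space-time harmonic extension) to kill it, after which $\langle s_t,P_{T-t}\phi\rangle$ again becomes a pure local martingale whose quadratic variation is dominated, coefficient by coefficient, by that of $\langle w_t,P_{T-t}\phi\rangle$. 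In both cases one is left comparing two one-dimensional (time-changed) martingales $M^s$ and $M^w$ with the same initial value and with $\dx[M^s]_t\le \dx[M^w]_t$ in the sense that the integrand for $M^s$ is the integrand for $M^w$ with the same state substituted but multiplied by a factor in $[0,1]$.

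The comparison $\bbE[\Phi(M^s_t)]\le\bbE[\Phi(M^w_t)]$ for nonnegative nondecreasing convex $\Phi$ I would obtain by a coupling/SDE-comparison argument rather than a crude quadratic-variation bound: realize $\bar w$ as the solution of the autonomous SDE $\dx Z_t=\sqrt{b}\,|Z_t|\,\dx B_t$ (using $\langle w_t,w_t\rangle\ge \langle w_t,\boldsymbol 1\rangle^2/\#\,$... — more cleanly, in case (i) note $\langle w_t,w_t\rangle$ and $\langle s_t,\sigma_t^2 s_t\rangle$ are both controlled, but the honest route is the Dawson–Perkins-type comparison for mutually catalytic systems). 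Concretely, I would cite or reproduce the standard fact that if $M^s$ and $M^w$ are continuous martingales with $M^s_0=M^w_0$ and $\dx[M^s]_t=\theta_t\,\dx[M^w]_t$ with $\theta_t\in[0,1]$ adapted, then $M^s_t$ is ``less spread out'' than $M^w_t$ in convex order; this follows from a time-change (Dambis–Dubins–Schwarz) reducing $M^w$ to Brownian motion run for time $[M^w]_t$ and $M^s$ to the same Brownian motion run for the smaller time $[M^s]_t$, together with the submartingale property of $\Phi(\text{BM})$ and optional stopping. I expect the main obstacle to be precisely this last step done rigorously: one must handle the fact that $[M^s]_t\le[M^w]_t$ only holds if the \emph{same} driving Brownian motion and the \emph{same} spatial profile are used, which they are not a priori — so the clean argument is to first use Itô's formula on $\Phi$ directly, $\dx\Phi(M^s_t)=\Phi'(M^s_t)\,\dx M^s_t+\tfrac12\Phi''(M^s_t)\,\dx[M^s]_t$, and then to compare the expectations of the two Itô expansions after conditioning, exploiting convexity ($\Phi''\ge0$) and the pointwise domination of the quadratic-variation densities. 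Taking expectations kills the martingale parts and leaves $\tfrac{\dx}{\dx t}\bbE[\Phi(M^s_t)]\le \tfrac{\dx}{\dx t}\bbE[\Phi(M^w_t)]$ \emph{provided} one has already coupled the state processes so that $M^s_t\le$-in-law controls things — which is circular, so ultimately the coupling via a common Brownian motion and the comparison theorem for the 1D SDEs $\dx u_t(i)=\Delta u_t(i)\,\dx t+\sqrt{b u_t(i)^2+bu_t(i)\eta_t(i)}\,\dx W^u_t(i)$ versus the PAM SDE (monotonicity in the initial condition and in the coefficient, as in \cite{SS80}) is the cleanest tool, and that is the technical heart I would expect to spend the most care on.
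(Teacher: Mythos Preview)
Your starting observation is exactly right and is also the heart of the paper's argument: writing $s_t=u_t+v_t$ and $\eta_t=v_t-u_t$, one has $4u_tv_t=s_t^2-\eta_t^2$, so $s$ solves
\[
\dx s_t(i)=\Delta s_t(i)\,\dx t+\sqrt{b\bigl(s_t(i)^2-\eta_t(i)^2\bigr)}\,\dx W_t(i),
\]
whose diffusion coefficient is pointwise dominated by the PAM coefficient $\sqrt{b\,s_t(i)^2}$.

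The gap is in everything that follows. Your attempt to pass to a one-dimensional comparison of $M^s_t=\langle s_t,\boldsymbol 1\rangle$ (or $\langle s_t,P_{T-t}\phi\rangle$) with $M^w_t=\langle w_t,\boldsymbol 1\rangle$ cannot work as written, and you correctly diagnose this yourself as circular: the quadratic variation of $M^s$ is $b\int_0^t\sum_i(s_r(i)^2-\eta_r(i)^2)\,\dx r$ and that of $M^w$ is $b\int_0^t\sum_i w_r(i)^2\,\dx r$. These depend on the \emph{entire spatial profiles} $s_r(\cdot)$ and $w_r(\cdot)$, not on $M^s_r$ and $M^w_r$ alone, so neither total-mass process is an autonomous one-dimensional SDE. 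There is no adapted $\theta_t\in[0,1]$ with $\dx[M^s]_t=\theta_t\,\dx[M^w]_t$ unless you have already coupled $s$ and $w$ pathwise, which is precisely what is not available. The DDS time-change idea fails for the same reason: it would give $M^s$ as Brownian motion run for time $[M^s]_t$ and $M^w$ as a \emph{different} Brownian motion run for time $[M^w]_t$, and without a pathwise ordering $[M^s]_t\le[M^w]_t$ (which again requires a coupling of the full systems) one cannot conclude convex ordering. Your final fallback to site-wise comparison of $u_t(i)$ with a PAM is also off-target: that would compare $u$, not $s=u+v$, and in any case monotone coupling of each coordinate separately does not yield $\bbE[\Phi(\sum_i s_t(i))]\le\bbE[\Phi(\sum_i w_t(i))]$ for convex $\Phi$.

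What the paper does instead is to stay at the level of the full interacting system and invoke the comparison theorem of Greven, Klenke and Wakolbinger \cite{GKW02}, which says: if two systems of interacting diffusions on the same site space have the same drift and site-wise ordered diffusion functions $g^\xi(i,x,t)\le g^w(i,x)$, then $\bbE[F(\xi_t)]\le\bbE[F(w_t)]$ for every $F$ in the class of smooth functions with all mixed second partials nonnegative. Functions of the form $F_r(x)=F\bigl(\sum_{i\in B_r}x(i)\bigr)$ with $F$ convex belong to this class. Because the hypotheses of \cite{GKW02} require bounded state space and a finite index set, the paper first truncates to a torus $\Lambda_N$ with diffusion coefficients damped by factors $(1-x/N)$, applies the theorem there, and then removes the cutoffs by weak convergence and monotone convergence in $r$. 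The missing idea in your proposal is exactly this: the comparison has to be carried out at the infinite-dimensional level, not after collapsing to a scalar martingale.
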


\begin{proof}
	We start with proving~(i). The idea is to use comparison result  
of Greven, Klenke and Wakolbinger  \cite{GKW02} (the earlier version of this result for the homogeneous case is due to 
Cox, Fleischmann, and Greven \cite{CFG96}); since the conditions of the result in~ \cite{GKW02} on function $\Phi$  are not satisfied in the proposition we will use the usual technique of approximation.  Let $\Lambda^N=[-N,N]^d$ be a torus, and $\Delta$ acting on 
 functions on the torus will be Laplacian with periodic boundary conditions.  
Let  $(u^N,v^N)$ be a solution of 
\begin{equation}\label{SBMN}
\left\{
\begin{array}{l}
 \dx 
 u^N_t(i)=\Delta u^N_t(i)\,\dx t+\sqrt{b u^N_t(i)v^N_t(i)\left(1-\frac{u^N_t(i)}{N}\right)\left(1-\frac{v^N_t(i)}{N}\right)}\,\dx W_t(i)\,,\quad 
 t\geq0, i\in \Lambda_N\,,\\[1ex]
 \dx v^N_t(i)=\Delta v^N_t(i)\,\dx t+\sqrt{b u^N_t(i)v^N_t(i)\left(1-\frac{u^N_t(i)}{N}\right)\left(1-\frac{v^N_t(i)}{N}\right)}\,\dx W_t(i)\,,\quad 
 t\geq0, i\in \Lambda_N\,.
 \end{array}
 \right.
\end{equation}
with $u^N_0(i)=u_0(i), v^N_0(i)=v_0(i), i\in \Lambda_N\,.$
The solutions of this equation take values in $ [0,N]^{\Lambda_N}\times [0,N]^{\Lambda_N}$. 



In the following it is understood that $i\in\Lambda_N$ and $t\geq0$.
Let 
\begin{equation}
\eta^N_t(i):= v^N_t(i)-u^N_t(i)
\end{equation}
and
\begin{equation}
\xi^N_t(i):= u^N_t(i)+v^N_t(i)= 2u^N_t(i)+\eta^N_t(i)\;\;\text{so that} 
\;\;u^N_t(i)=\frac12(\xi^N_t(i)-\eta^N_t(i))\,.
\end{equation}
Then
\begin{align}
  u^N_t(i)v^N_t(i) 
 = \frac14 \left( \xi^N_t(i)^2-\eta^N_t(i)^2\right) \,.
\end{align}
Therefore, 
we get that $\xi^N$ satisfies the following system of equations:
\begin{align}
 \dx \xi^N_t (i)
 & =  \Delta \xi^N_t(i) \,\dx t + \sqrt{b ( 
\xi^N_t(i)^2-\eta^N_t(i)^2)\left(1-\frac{\xi^N_t(i)-\eta^N_t(i)}{2N}\right)\left(1-\frac{\xi^N_t(i)+\eta^N_t(i)}{2N}\right) }\,\dx W_t(i)\,,\;\; t\geq0, i\in \Lambda_N\,.
\end{align}
Clearly since the noises for $u^N$ and $v^N$ are  the same we get that $\eta^N$ is deterministic that solves the following system of equations:
\begin{align}
\dx \eta^N_t (i)
& =  \Delta \eta^N_t(i) \,\dx t\,,\;\; t\geq0, i\in \Lambda_N\,.
\end{align}
Let now $w^N=(w^N_t)_{t\geq0}$ be an approximate PAM$(b,w_0)$. That is, for an 
independent family of Brownian motions $\{W^w(i):i\in\bbZ^d\}$, let $w^N$ on $\Lambda_N$ satisfy
\begin{align}
 \dx w^N_t(i)=  \Delta w^N_t(i) \,\dx t +
  \sqrt{b w^N_t(i)^2\left(1-\frac{w^N_t(i)}{2N}\right)}\,\dx W^w
_t(i)\,,\quad  t\geq0,  i\in \Lambda_N\,.
\end{align}
with $w^N_0(i)=w_0(i), i\in \Lambda_N\,.$
Let
\begin{align}
g_N^{\xi}(i,x,t)& := b (x^2 - \eta^N_t(i)^2)\left(1-\frac{x-\eta^N_t(i)}{2N}\right)\left(1-\frac{x+\eta^N_t(i)}{2N}\right)\,,\quad
x\,, t\geq0\,, i\in \Lambda_N\,,
\\
 g^w_N(i,x)&:= b
x^2\left(1-\frac{x}{2N}\right)\,,\quad x\,, t\geq0\,, i\in \Lambda_N\,.
\end{align}
Then, for all $x,t\geq0, i\in\Lambda_N$
\begin{equation}\label{e:compdifffct}
g_N^{\xi}(i,x,t) \leq  g_N^w(i,x)\,.
\end{equation}
The respective generators of $\xi^N$ and $w^N$ are given by the closure of the 
following operators
\begin{align}
 G_{N,t}^\xi f(\phi)&= \sum_{i\in \Lambda_N} \Delta \phi(i) \,\partial_i f(\phi) +\frac12  \sum_{i\in \Lambda_N}  
g_N^\xi(\phi(i),t)\,
\partial_{i} ^2 f(\phi)\,,\quad \phi\in(\bbR_+)^{ \Lambda_N}\,,\\
G^w_N f(\phi)&= \sum_{i\in \Lambda_N} \Delta \phi(i) \,\partial_i f(\phi) +\frac12  \sum_i 
g_N^w(\phi(i))\,
\partial_i^2 f(\phi)\,,\quad \phi\in(\bbR_+)^{ \Lambda_N}\,,
\end{align}
acting on functions $f:(\bbR_+)^{ \Lambda_N}\mapsto\bbR$ 
such that all their partial derivatives up to order $2$ exist and are continuous. 
Now let $\mcD_N$ 
consist of functions $f$ as specified in the previous sentence with the additional requirement that 
$\partial_i\partial_j f\geq0$ for all $i,j\in \Lambda_N$.
Let $C^{2}_{bc}$ be the set of  convex functions 
 $F:\bbR_+\mapsto\bbR$ with bounded on the compacts continuous partial derivatives 
of orders $m=0,1,2$. Fix  arbitrary $F\in C^{2}_{bc}$. 
For $B_r$ the open ball of radius $r$ in $\bbZ^d$, 
let 
$F_{r}:(\bbR_+)^{\bbZ^d}\mapsto\bbR_+$, be defined by 
\begin{equation}
 F_{r}(x)=F\biggl(\,\sum_{i\in B_r} x(i)\biggr)\,.
\end{equation}
Then, for all $r\in (0,N)$,  we get that $F_{r}\in\mcD_N$,
and taking into account 
\eqref{e:compdifffct} we can apply 
 Theorem 2 of 
\cite{GKW02} to obtain that for each $r\in (0,N)$,
\begin{equation}
\label{eq:29_08_01}
 \bbE_{u_0^N+v_0^N}[F_{r}(\xi^N_t)]
 \leq \bbE_{u_0^N+v_0^N}[F_{r}(w^N_t)]\,,\; \forall t\geq 0. 
\end{equation}
It is easy to check that as $N\rightarrow\infty$, $(\xi^N,\eta^N)$ converges weakly  (in the space of continuous $E^2_{\text{tem}}$-valued paths)  to $(\xi,\eta)$,
where $\xi_{\cdot}=u_{\cdot}+v_{\cdot},$ \mbox{$ \eta_{\cdot}=v_{\cdot}-u_{\cdot}$} and $(u,v)$ is a solution to the SBM($(1,b, u_0, v_0$). Moreover $w^N$ converges weakly (in the space of continuous $E_{\text{tem}}$-valued paths)  to $w$ which solves PAM$(b, w_0=u_0+v_0)$.  
Thus, letting $N\to\infty$, passing to the limit in~\eqref{eq:29_08_01} and using continuity of $F_r$ on $E_{\text{tem}}$, we get 
\begin{equation}
 \bbE_{u_0+v_0}[F_{r}(\xi_t)]
 \leq \bbE_{u_0+v_0}[F_{r}(w_t)]\,, \forall t\geq 0.
\end{equation}
Taking the limit $r\to\infty$ on both sides and using the monotone 
convergence theorem yields
\begin{equation}
 \bbE_{(u_0,v_0)} [F(\langle u_t,\boldsymbol{1}\rangle+\langle 
v_t,\boldsymbol{1}\rangle)]
\leq 
\bbE_{u_0+v_0}[F(\langle w_t,\boldsymbol{1}\rangle)]\,,  \forall t\geq 0.
\end{equation}
Then by approximating  a non-negative  non-decreasing convex function $\Phi$ by functions from $C^{2}_{bc}$ we can easily finish the proof of~\eqref{e:moment_summable1} by passing to the limit.

The proof of~(ii) goes along the same lines with 
\begin{equation}
 F_{r}(x)=F\biggl(\,\sum_{i\in B_r} x(i)\phi(i)\biggr)
\end{equation}
and will thus be omitted.
\end{proof}

\begin{corollary}[Uniform integrability]
\label{cor:UI_SBM}
 Let $d\geq3$. 
 Let  $b_\ast$ be as in Theorem \ref{t:DGH} and let 
 $b\in (0,b_\ast)$.  Let $(u,v)$ be the solution of 
SBM$(1,b,u_0,v_0)$.
\begin{itemize}
\item[(a)] 
If $(u_0,v_0)\in E_\textup{fin}^2$,  then $\{\bar u_t\,, t\geq 0\}$ and  $\{\bar v_t\,, t\geq 0\}$ are uniformly integrable. 
\item[(b)] 
If $(u_0,v_0)=(\boldsymbol{\theta}_1,\boldsymbol{\theta}_2)$ for 
$\theta_1,\theta_2\geq0$, then for all $\phi\in E_\textup{fin}$ 

  $$\{ \langle u_t\,, \phi\rangle\,, t\geq 0\}  \;\text{and}\;  \{ \langle v_t\,, \phi\rangle \,, t\geq 0\} \text{ are uniformly integrable.}$$ 

\end{itemize}

\end{corollary}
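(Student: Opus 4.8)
The plan is to use the moment comparison of Proposition~\ref{p:momentcomp} to reduce both claims to the single statement that the total mass of the PAM is a uniformly integrable martingale in the range $b<b_\ast$, and then to handle the flat case by self-duality.

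\emph{Reduction to the PAM.} I would first record the elementary fact that a family $\{Z_t\}_{t\ge0}$ of non-negative random variables is uniformly integrable if and only if $\sup_{t\ge0}\bbE\bigl[(Z_t-K)^+\bigr]\to0$ as $K\to\infty$, and note that $x\mapsto(x-K)^+$ is non-negative, non-decreasing and convex on $\bbR_+$. Applying Proposition~\ref{p:momentcomp} with $\Phi(x)=(x-K)^+$, together with $\bar u_t\le\bar u_t+\bar v_t$ (respectively $\langle u_t,\phi\rangle\le\langle u_t+v_t,\phi\rangle$ for $\phi\in E_\textup{fin}$), yields $\bbE[(\bar u_t-K)^+]\le\bbE[(\bar w_t-K)^+]$ in case~(a) --- and likewise with $\bar v_t$, and with $\langle\cdot,\phi\rangle$ in case~(b) --- where $w$ denotes the PAM$(b,w_0)$ with $w_0=u_0+v_0$. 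Hence it suffices to prove: (I) $\{\bar w_t\}_{t\ge0}$ is uniformly integrable for any PAM with summable initial condition; and (II) $\{\langle w_t,\phi\rangle\}_{t\ge0}$ is uniformly integrable for any PAM with flat initial condition and any $\phi\in E_\textup{fin}$.

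\emph{Proof of (I).} Let $w$ be PAM$(b,w_0)$ with $w_0\in E_\textup{fin}$. By Proposition~\ref{l:totalmass} (the case $u=v=w$), $\bar w_t$ is a non-negative martingale, so it converges a.s.\ to a limit $\bar w_\infty$, and it is uniformly integrable if and only if $\bbE[\bar w_\infty]=\bar w_0$. For $d\ge3$ and $b<b_\ast$ this identity holds --- it is exactly the assertion that the PAM is in the weak-disorder phase, equivalently that its limiting equilibrium preserves the density, and is part of the longtime theory underlying Theorem~\ref{t:DGH}; see \cite{GDH07,BS10,BS11}. When $w_0\in E_\textup{cpt}$ one can also see it directly from self-duality: Lemma~\ref{l:selfdual} with $\boldsymbol{\theta}=\boldsymbol{1}$ gives $\bar w_t\eqd\langle w_0,w'_t\rangle$ with $w'$ the PAM$(b,\boldsymbol{1})$, and letting $t\to\infty$ --- using that $w'_t$ converges weakly in $E_\textup{tem}$ to the translation-invariant limiting law $\mu$ (Theorems~1.3--1.4 of~\cite{GDH07}), which in the stable regime $b<b_\ast$ has density one, $\int\psi(0)\,\mu(\dx\psi)=1$, and that $\psi\mapsto\langle w_0,\psi\rangle$ is continuous on $E_\textup{tem}$ --- one obtains $\bar w_\infty\eqd\langle w_0,\psi\rangle$ with $\psi\sim\mu$, so $\bbE[\bar w_\infty]=\sum_i w_0(i)\int\psi(i)\,\mu(\dx\psi)=\bar w_0$. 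The general case $w_0\in E_\textup{fin}$ follows by monotone approximation: the PAM is the \emph{linear} equation $\dx w_t(i)=\Delta w_t(i)\,\dx t+\sqrt{b}\,w_t(i)\,\dx W_t(i)$ and hence is monotone in its initial datum under coupled noise, so with $w_0^{(n)}\uparrow w_0$ in $E_\textup{cpt}$ we have $\bar w_\infty^{(n)}\le\bar w_\infty$ a.s., whence $\bbE[\bar w_\infty]\ge\bbE[\bar w_\infty^{(n)}]=\bar w_0^{(n)}\uparrow\bar w_0$, while $\bbE[\bar w_\infty]\le\bar w_0$ by Fatou; thus $\bbE[\bar w_\infty]=\bar w_0$. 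This proves (I), hence case~(a).

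\emph{Proof of (II), and the conclusion.} Let $w$ be PAM$(b,\boldsymbol{\theta_1+\theta_2})$ and $\phi\in E_\textup{fin}$. By Lemma~\ref{l:selfdual}, $\langle w_t,\phi\rangle\eqd(\theta_1+\theta_2)\,\bar{\tilde w}_t$, where $\tilde w$ is PAM$(b,\phi)$; since $\phi\in E_\textup{fin}$, the right-hand family is uniformly integrable by (I), hence so is $\{\langle w_t,\phi\rangle\}_{t\ge0}$. Combined with the reduction, this gives both (a) and (b). The only non-elementary input is the identity $\bbE[\bar w_\infty]=\bar w_0$ for the PAM total mass on the \emph{whole} interval $b<b_\ast$: for $b<b_2$ it follows at once from a uniform second-moment bound, but since $b_2<b_\ast$ the range $[b_2,b_\ast)$ genuinely rests on the deeper work of Greven--den Hollander and Birkner--Sun, and this is where I would expect the main difficulty to lie.
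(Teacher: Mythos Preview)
Your proposal is correct and uses essentially the same ingredients as the paper's proof: the comparison Proposition~\ref{p:momentcomp} applied to a convex test function to reduce to the PAM, self-duality (Lemma~\ref{l:selfdual}) to link the summable and flat cases, and the stable-regime results of \cite{GDH07} to conclude. The only differences are organizational: you use the criterion $\sup_t\bbE[(Z_t-K)^+]\to0$ where the paper invokes de la Vall\'ee--Poussin, and you prove the summable case~(I) first (via the martingale identity $\bbE[\bar w_\infty]=\bar w_0$, unpacked through duality and monotone approximation) and then deduce the flat case~(II) by duality, whereas the paper goes in the opposite order---establishing UI for the flat PAM directly from \cite{GDH07} and then transferring to the summable case by duality.
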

\begin{proof}
Let $w$ be a PAM$(b,w_0)$ such that 
$w_0=u_0+v_0$. 

Let us first prove~(b), that is, the case of flat initial conditions.  
In this case the uniform integrability of $\{\langle w_t,\phi\rangle, t\geq 0\}$, for  $\phi\in E_\textup{fin}$,  follows easily from Theorems  1.3, 1.4,
 of~\cite{GDH07}.  Then by necessary and sufficient criterion for uniform integrability, for any $\phi\in E_\textup{fin}$,  we get the existence of non-negative non-decreasing convex function 
$\Phi(t)$ on  $\bbR_+$ such that $\lim_{t\to\infty}\frac{\Phi(t)}{t}<\infty$ and 
$$ \sup_{t\geq 0} \bbE\left[ \Phi (\langle w_t,\phi\rangle)\rangle\right] =\infty. $$ 
Then Proposition~\ref{p:momentcomp}(ii) and again the criterion for uniform integrability imply that both $\{\langle u_t,\phi\rangle, t\geq 0\}$ and $\{ \langle v_t,\phi\rangle, t\geq 0\}$ are 
unifromly integarble for any $\phi\in E_\textup{fin}$. 

For the proof of~(a) we use the self-duality of PAM proved in Lemma~\ref{l:selfdual} and again  Theorems  1.3, 1.4,
 of~\cite{GDH07} to get uniform integrability of $\{\bar w_t\,, t\geq 0\}$ and then we use  Proposition~\ref{p:momentcomp}(i) and follow the lines of the proof for the case (b).  
\end{proof}

\subsection{Proof of Theorem \ref{t:main}(\ref{i:sumcoex})}\label{s:proofcoexistfin}

With the preparations from the previous section we are now in a position to 
prove Theorem \ref{t:main}(\ref{i:sumcoex}), the coexistence result for 
summable initial conditions.

We need to show that 
 \begin{equation}
\label{eq:29_07_2}
  \bbP\{\bar{u}_\infty\bar{v}_\infty>0\}>0\,.
 \end{equation}

Without loss of generality  assume that $\bar u_0 \leq \bar v_0$. Note that $\bar\eta_t\equiv\bar v_t- \bar u_t=\bar v_0- \bar u_0\geq 0$
for all $t$ since $\eta$ solves the heat equation  and  its total mass remains constant. Therefore, by sending $t$ to infinity we get 
\begin{equation}
\label{eq:29_07_1}
\bar v_\infty \geq \bar u_\infty\,, \; \bbP-{\rm a.s.}
\end{equation}
By Corollary~\ref{cor:UI_SBM}(a) $\{\bar u_t\,, t\geq 0\}$ is uniformly integrable, and thus we get 

\begin{align}
\bbE_{u_0}[\bar u_\infty]=
 \bbE_{u_0}[\lim_{t\to\infty}\bar u_t]
 =\lim_{t\to\infty} \bbE_{u_0}[\bar u_t]=\bar u_0>0\,.
\end{align}
Thus $  \bbP\{\bar{u}_\infty>0\}>0$ and  using~\eqref{eq:29_07_1} we get~\eqref{eq:29_07_2}. 

\qed

\subsection{Proof of Theorem \ref{t:main2}(\ref{i:sumcoex_1})}
The proof of the  coexistence result for 
flat initial conditions goes along the similar lines as the proof for summable initial conditions. 
Let  $(u_\infty, v_\infty)$ be an arbitrary weak limit point  of $\{(u_t, v_t)\,, t\geq 0\}$.    
We will show a little bit more than required  in the definition of coexistence. Let  
  $(\phi,\phi)$ be an arbitrary element in $E_\textup{cpt}^2$. 
  We will show that 
 \begin{equation}
 \label{eq:1_08_2}
  \bbP\{\langle u_\infty,\phi\rangle\langle 
v_\infty,\phi\rangle >0\}>0\,.
 \end{equation}

Without loss of generality  assume that $\theta_1 \leq  \theta_2$. 
Note that $\eta_t(x)\equiv v_t(x)- u_t(x)\equiv \theta_2- \theta_1\geq 0$
for all $t\geq 0$ and $x\in \bbZ^d$ since $\eta$ solves the heat equation  starting at constant initial condition $\boldsymbol{\theta}_2-\boldsymbol{\theta}_1$.
 Therefore, by sending $t$ to infinity we get 
that for  an arbitrary weak limit point $(u_\infty, v_\infty)$ the following holds:
\begin{equation}
\label{eq:1_08_1}
 v_\infty (x) \geq  u_\infty(x)\,, \forall x\in \bbZ^d\,,\; \bbP-{\rm a.s.}
\end{equation}
By Corollary~\ref{cor:UI_SBM}(b) $\{\langle u_t,\phi\rangle, t\geq 0\}$ is uniformly integrable, and thus we get 

\begin{align}
\bbE[\langle u_\infty,\phi\rangle]
 =\lim_{t\to\infty} \bbE_{(\boldsymbol{\theta}_1, \boldsymbol{\theta}_2)}[\langle u_t,\phi\rangle]=\theta_1\langle 1,\phi\rangle>0\,.
\end{align}
Thus $  \bbP\{\langle u_\infty,\phi\rangle>0\}>0$ and  using~\eqref{eq:1_08_1} we get~\eqref{eq:1_08_2}. 

\qed




\section{Proof of 
Theorems\ref{t:main}(\ref{i:sumnoncoex}) and~\ref{t:main2}(\ref{i:sumnoncoex}): Coexistence 
Impossible.}
\label{s:proofnoncoex}

\subsection{Proof of Theorem \ref{t:main2}(\ref{i:sumnoncoex})}\label{s:noncoexflat}
We are now turning our attention to the proof of
Theorem\ref{t:main2}(\ref{i:sumnoncoex}) which states that coexistence is 
impossible for flat initial conditions in the recurrent case ($d\leq 2$) for arbitrary $b>0$ and in the transient case ($d\geq 3$) for arbitrary $b>b_\ast$. 

If in addition, we assume $u_0=v_0=\boldsymbol{\theta}>0$, then the SBM coincides with the PAM, whose longtime 
behavior has been studied extensively in \cite{GDH07}, see Theorem 
\ref{t:DGH}. In this case we immediately get that the coexistence is impossible, and both populations do not survive. 

Now, without loss of generality  assume that $\theta_1 \leq  \theta_2$. In what follows, we consider both cases of $d\leq 2$, $b>0$ and $d\geq 3, b>b_\ast$ simultaneously. 
Note that $\eta_t(i)\equiv v_t(i)- u_t(i)\equiv  \theta_2- \theta_1\geq 0$
for all $t\geq 0$ and $i\in \bbZ^d$ since $\eta$ solves the heat equation  starting at constant initial condition $\boldsymbol{\theta}_2-\boldsymbol{\theta}_1$ and thus it remains constant. 
Note that we can re-write the equation for $u$ 
as follows:
\begin{equation}\label{e:pamperturb_a}
 \dx u_t(i)=\Delta u_t(i)\,\dx t+\sqrt{b u_t(i)(u_t(i)+ (\theta_2- \theta_1))}\,\dx W_t(i)\,,\; i\in \bbZ^d.
\end{equation}
Thus, the 
diffusion coefficient in the generator for $u$ is strictly 
larger than the diffusion coefficient for the PAM$(b)$. 
We can therefore again use a 
comparison result for interacting diffusions
by Greven, Klenke and Wakolbinger  \cite{GKW02}. 
If $w$ is PAM$(b)$ with $w_0=\boldsymbol{\theta}_1$, 
Theorem 2 in~\cite{GKW02}
 immediately implies
\begin{align}
\bbE_{(\boldsymbol{\theta}_1, \boldsymbol{\theta}_2)}[e^{-\langle u_t,\phi\rangle}]\geq 
\bbE_{\boldsymbol{\theta}_1}[e^{-\langle w_t,\phi\rangle}]\,, \quad \forall t\geq0, 
\end{align}
for any $\phi \in E_\textup{cpt}$. 
By Theorem~\ref{t:DGH}(i), the right hand side of the above equation converges to $1$ as $t\to\infty$. Therefore the 
left hand side also converges to $1$, and thus we get that 
 \begin{equation}
		\langle u_t, \phi\rangle\to 0\,,
		\end{equation}
		in probability as $t\to\infty$. 
Since $v_t=u_t + (\boldsymbol{\theta}_2-\boldsymbol{\theta}_1)$ we immediately get
\begin{equation}
		\langle v_t, \phi\rangle\to (\theta_2-\theta_1)\langle {\boldsymbol 1}, \phi\rangle \,,
		\end{equation}
		in probability as $t\to\infty$, and  this finishes the proof. 
 \qed  
   
   \begin{remark}
Morally, the comparison result 
implies that ``more noise kills''. In other words, if a system of (locally 
1-dimensional) interacting diffusions weakly dies out as time tends to infinity, 
then so does a system which has a larger diffusion coefficient. Intuitively this 
makes sense since locally $0$ is a trap for the system. Very roughly 
speaking, the process gets close to 0 at some time almost surely, and the 
higher the fluctuations the higher the chance that the process will actually 
hit zero once it gets close. Therefore, when initial conditions are 
monotonic, the comparison result implies that the non-coexistence regime of the 
PAM immediately carries over to the SBM. Actually it 
implies not only  
non-coexistence but 
also that it is the population which has been smaller 
at the beginning is the one which will suffer extinction.
\end{remark}

\subsection{Proof of Theorem \ref{t:main}(\ref{i:sumnoncoex})}\label{s:noncoexsum}
In the case of monotone initial conditions $v_0(\cdot)\geq u_0(\cdot)$, to get the result, we can argue as in the proof of Theorem \ref{t:main2}(\ref{i:sumnoncoex}) while again using the comparison theorem from~Greven, Klenke and Wakolbinger~\cite{GKW02}. 

Thus, let us consider the general case when initially one population does not necessary dominate another one. Again, in what follows, we consider both cases of $d\leq 2$, $b>0$ and $d\geq 3, b>b_\ast$ simultaneously. 
 Recall that we assume without loss of generality that $\bar u_0=\langle u_0, \boldsymbol{1}\rangle \leq \langle v_0, \boldsymbol{1}\rangle = 
 \bar v_0$.   Define
\begin{equation}
 \eta_t:=v_t-u_t\,,\quad t\geq0\,.
\end{equation}
Then $\dx\eta_t=\Delta \eta_t\dx t, \; t\geq0$. 
Note that we can re-write the equation for $u$ 
as follows:
\begin{equation}\label{e:pamperturb}
 \dx u_t(i)=\Delta u_t(i)\,\dx t+\sqrt{b u_t(i)(u_t(i)+\eta_t(i))}\,\dx W_t(i)\,,\quad t\geq0, i\in \bbZ^d. 
\end{equation}

For our general initial conditions in $E_\textup{fin}$ the 
perturbation $\eta_t$ in the diffusion coefficient in \eqref{e:pamperturb} can 
be positive or negative. Hence, the comparison argument used above fails since it is no 
longer true that $u_t(i)(u_t(i)+\eta_t(i))\geq u_t(i)^2$ for all $t\geq0, i\in \bbZ^d$. 
 Nonetheless, we will again use comparison techniques with 
the PAM. We will quickly explain the main idea.

We will need the following notation: for any $\phi:\bbZ^d\mapsto\bbR$ we denote its 
positive and negative part by
\begin{equation}
 \phi^+:= \max\{\phi,0\}\,,\quad \phi^-:=\max\{-\phi, 0\}\,.
\end{equation}

 The idea is to decompose 
$u_t$ into the mass where it exceeds $v$ and the minimum of $u$ and $v$. Define 
the minimum process $w=(w_t)_{t\geq0}$ by setting
\begin{equation}\label{e:min}
 w_t(i):=\min\{u_t(i),v_t(i)\}\,,\quad t\geq0\,,  i\in\bbZ^d.   
\end{equation}
Then, clearly
\begin{equation}\label{e:uweta}
 u_t(i)=w_t(i)+\eta_t^-(i)\,,\quad t\geq0\,,  i\in\bbZ^d. 
\end{equation}
 We will show that the total mass of the negative part of the 
heat equation, $\eta_t^-$, vanishes as $t\to\infty$. Moreover it turns out that 
the minimum process $w$ exhibits an ``approximate'' duality with the PAM which allows us 
to deduce its extinction from the extinction of the PAM. Then, clearly if the 
excess $u$ has over $v$ vanishes and the minimum of $u$ and $v$ also goes to 0, 
then $u$ will eventually suffer extinction!

Before we proceed to the proof of the non-coexistence result we need a 
brief digression on the heat equation. 

In order not to confuse with the difference process $v_t-u_t$, for the 
following general arguments we use the generic notation $\zeta$ for the 
solution 
of the heat equation. So, for $f:\bbZ^d\mapsto\bbR$, let 
\begin{equation}
  \dx \zeta_f(t,i)=\Delta \zeta_f(t,i)\,\dx t\,,\quad \zeta(0,i)=f(i)\,,\quad 
t\geq0, i\in\bbZ^d\,. 
\end{equation}
If $M>0$ and $f=M\mathbbm{1}_{\{0\}}$, we write $\zeta^M$ for the corresponding solution. In the following, 
for a differentiable function $g:\bbR_+\mapsto\bbR,$ 
 we denote the 
derivative by $\partial_tg$. 

For the proof Theorem~\ref{t:main}(\ref{i:sumnoncoex}) we will
need to understand the behavior of the negative part of $\zeta$. For example, 
we will prove and use the fact that if at time $t=0$ there is an overall 
excess of ``positive heat'', that is, $\langle 
\zeta_f^+,\boldsymbol{1}\rangle\geq 
\langle \zeta_f^-,\boldsymbol{1}\rangle$, the 
negative heat will eventually disappear: $\langle 
\zeta_f^-(t),\boldsymbol{1}\rangle\to 0$, as $t\to\infty$. 
Note that $\langle \zeta_f(t),\boldsymbol{1}\rangle=\langle f,\boldsymbol{1}\rangle$ is constant.

Some of the facts about the discrete heat equation which we will state and 
prove in the following are probably well-known in the literature. 
For completeness, we present anyway these results including our proofs.

For $T>0$ and $i\in\bbZ^d$, let $\mcZ_f(T,i)$ be the set of zeros of 
$\zeta_f(t,i)$ in $[0,T]$, that is, 
\begin{equation}
\mcZ_f(T,i):=\{t\in[0,T]: \zeta_f(t,i)=0\}\,.
\end{equation}
In what follows for any  $A\subset \bbR$, $|A|$ will denote the number of points in $A$.  
\begin{proposition}[$\zeta$ analytic]\label{p:analytic}
 Let $f:\bbZ^d\mapsto\bbR$ be bounded. Then
$\zeta_f(\cdot,i)$ is (real) analytic for each $i\in\bbZ^d$. If $f\not\equiv0$, 
then  $|\mcZ_f(T,i)|<\infty$ for each $T>0,i\in\bbZ^d$.
\end{proposition}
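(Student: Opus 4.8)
The plan is to realize $\zeta_f(\cdot,i)$ as a convergent power series with an explicit radius of convergence, then invoke the identity theorem for real-analytic functions to control the zero set. First I would write the solution of the discrete heat equation via the semigroup of the simple random walk: $\zeta_f(t,i)=P_t f(i)=\sum_{j\in\bbZ^d} p_t(i,j) f(j)$, and recall that the discrete Laplacian $\Delta$ is a bounded operator on $\ell^\infty(\bbZ^d)$ with $\|\Delta\|_{\infty\to\infty}\leq 2$ (indeed $\Delta\phi(i)$ is an average of $\phi(j)-\phi(i)$ over neighbours). Hence $P_t=e^{t\Delta}=\sum_{n\geq 0}\frac{t^n}{n!}\Delta^n$ converges in operator norm for every $t\in\bbR$, and the same expansion around any base point $t_0$ gives $\zeta_f(t,i)=\sum_{n\geq 0}\frac{(t-t_0)^n}{n!}\bigl(\Delta^n \zeta_f(t_0,\cdot)\bigr)(i)$ with $|(\Delta^n\zeta_f(t_0,\cdot))(i)|\leq 2^n\|f\|_\infty$ (using $\|\zeta_f(t_0,\cdot)\|_\infty\leq\|f\|_\infty$, which follows from $P_{t_0}$ being a contraction on $\ell^\infty$). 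This power series therefore has infinite radius of convergence in $t$, so $t\mapsto\zeta_f(t,i)$ is real-analytic on all of $\bbR$ (in particular on $[0,\infty)$) for each fixed $i$.

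Next I would handle the finiteness of the zero set. Fix $i\in\bbZ^d$ and suppose, for contradiction, that $f\not\equiv 0$ but $|\mcZ_f(T,i)|=\infty$ for some $T>0$. Then the zeros of the analytic function $t\mapsto\zeta_f(t,i)$ on the compact interval $[0,T]$ have an accumulation point $t_*\in[0,T]$ (by Bolzano--Weierstrass); by the identity theorem for real-analytic functions on a connected interval, $\zeta_f(\cdot,i)$ vanishes identically on $[0,\infty)$. In particular all time-derivatives vanish at $t=0$, i.e. $(\Delta^n f)(i)=0$ for every $n\geq 0$.

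The remaining step — and the one I expect to be the only genuinely substantive point — is to deduce $f\equiv 0$ from the fact that $(\Delta^n f)(i)=0$ for all $n\geq 0$ at a single site $i$. For this I would argue that $\zeta_f(t,i)\equiv 0$ for all $t$ forces, by differentiating the heat equation or directly from the semigroup, the stronger statement that the whole trajectory near $i$ is constrained: since $\partial_t \zeta_f(t,i)=\Delta\zeta_f(t,i)$ is an average of $\zeta_f(t,j)$ over neighbours $j\sim i$ minus $\zeta_f(t,i)$, and $\zeta_f(t,i)\equiv 0$, we get $\sum_{j\sim i}\zeta_f(t,j)\equiv 0$; iterating this bootstrapping via higher derivatives $\partial_t^k\zeta_f(t,i)=(\Delta^k\zeta_f(t,\cdot))(i)\equiv 0$ and expanding $\Delta^k$ one propagates the vanishing outward through $\bbZ^d$, showing $\zeta_f(t,j)\equiv 0$ for all $j$, hence $f\equiv 0$, a contradiction. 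Concretely one shows by induction on $|j-i|$ (graph distance) that $\zeta_f(\cdot,j)\equiv 0$: the base case is the hypothesis, and the inductive step uses that $(\Delta^{|j-i|}\zeta_f(t,\cdot))(i)$ is, up to lower-order already-vanishing terms, a nonzero multiple of $\zeta_f(t,j)$ summed over sites at distance $|j-i|$, which together with a symmetry or a careful accounting of the finitely many lattice paths pins down $\zeta_f(t,j)$. The potential obstacle here is that $\Delta^n f(i)$ mixes contributions from all sites within distance $n$, so the bootstrapping needs the heat-equation structure (all the $\partial_t^n$ at $i$ vanish, not just finitely many) rather than a single relation; once one uses the full tower of vanishing derivatives this propagation is clean, and I would present it as a short induction.
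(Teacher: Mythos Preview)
Your analyticity argument is correct and essentially equivalent to the paper's: you use the operator exponential $P_t=e^{t\Delta}$ together with $\|\Delta\|_{\infty\to\infty}\leq 2$, while the paper writes out the Poisson representation $p_t(i,j)=e^{-t}\sum_{n\geq 0}\frac{t^n}{n!}\,p^n(i,j)$ explicitly; both yield a power series in $t$ with infinite radius of convergence.

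The genuine gap is in your propagation argument for the second claim. You want to show that $\zeta_f(\cdot,i)\equiv 0$ forces $f\equiv 0$, by bootstrapping outward from $\sum_{j\sim i}\zeta_f(t,j)\equiv 0$. This step cannot work, because the implication is \emph{false}: in $d=1$ take $f=\mathbbm{1}_{\{1\}}-\mathbbm{1}_{\{-1\}}$ (or any bounded odd function). By symmetry $p_t(0,1)=p_t(0,-1)$, so $\zeta_f(t,0)=0$ for every $t\geq 0$; hence $\mcZ_f(T,0)=[0,T]$ is infinite while $f\not\equiv 0$. Your inductive step tries to pass from ``the sum over neighbours vanishes'' to ``each neighbour vanishes,'' and the counterexample shows that no amount of higher derivatives rescues this: the conditions $(\Delta^n f)(i)=0$, $n\geq 0$, are linear constraints on $f$, and the odd functions sit in their common kernel when $i$ is the center of symmetry.

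For comparison, the paper's own proof does not attempt your bootstrapping at all; it simply asserts that $\zeta_f(\cdot,i)\equiv 0$ ``is a contradiction to the fact that $f\not\equiv 0$,'' so it skates over the very point you tried (and failed) to justify. The statement as written is therefore not correct at full generality. What the subsequent corollary and proposition actually need is only that $t\mapsto\zeta_f(t,i)$ is real-analytic, so that either it vanishes identically on $[0,\infty)$ or its zero set in any compact interval is finite; in both cases $\zeta_f^{\pm}(\cdot,i)$ is differentiable off a finite set, which suffices for the downstream arguments.
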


\begin{proof}
In the following we consider $i\in\bbZ^d$ to be fixed. Recall that 
$p_t(\cdot,\cdot)$ denotes the transition probability of a continuous-time, rate $1$,
simple symmetric random walk on $\bbZ^d$ and let $p^n(i,j)$ be the 
probability that a discrete-time simple symmetric random walk on $\bbZ^d$ jumps 
from $i$ to $j$ in $n$ steps. Then, for  $t\geq 0$, 
\begin{align}\label{e:reorder}
 \zeta_f(t,i)
 &=\sum_{j} p_t(i,j) f(j)
 =e^{-t}\sum_{j} \Bigl(\,\sum_{n=0}^\infty p^n(i,j) \frac{t^n}{n!} \,\Bigr) 
f(j)
=e^{-t} \sum_n \frac1{n!} \Bigl(\, \sum_j p^n(i,j) f(j) \,\Bigr) t^n
\end{align}
and thus $\zeta_f(\cdot,i)$ has a representation as a power series whose 
radius of convergence is infinite. Note that if $f$ is bounded, then
\begin{equation}
\sum_n \sum_j  \frac{t^n}{n!} p^n(i,j) |f(j)|  <\infty
\end{equation}
and hence we can re-order the series in \eqref{e:reorder}. 

To prove the second statement, note that it is 
immediate from \eqref{e:reorder} that $\zeta_f(\cdot,i)$ can be 
extended to an analytic function on the complex plane. Assume that for some 
$T>0$, $|\mcZ_f(T,i)|=\infty$. Then $\mcZ_f(T,i)$ has an accumulation point.  
It is well-known that if the zero set of an analytic function has an 
accumulation point then the function is identically zero on its entire domain 
(see for example Theorem~4.3.7 in \cite{MR0447532}). But this is a contradiction 
to the fact that $f\not\equiv0$.
\end{proof}

\begin{corollary}[Differentiability of $\zeta^+$]\label{c:zetadiff}
 Let $f:\bbZ^d\mapsto\bbR$ be bounded, $i\in\bbZ^d$, and $T>0$. There are only 
finitely many points in $[0,T]$ where $\zeta_f^+(\cdot,i)$ is not 
differentiable.
\end{corollary}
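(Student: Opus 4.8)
The plan is to leverage the real analyticity of $\zeta_f(\cdot,i)$ established in Proposition~\ref{p:analytic}, together with the elementary observation that $\zeta_f^+(\cdot,i)$ can only fail to be differentiable at a zero of $\zeta_f(\cdot,i)$.

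First I would dispose of the degenerate case: if $\zeta_f(\cdot,i)$ vanishes identically on $[0,T]$, then $\zeta_f^+(\cdot,i)\equiv 0$ there and is trivially differentiable. (If $f\equiv0$ this is automatic; and when $f\not\equiv0$ Proposition~\ref{p:analytic} already guarantees the zero set is finite, so this case does not actually occur, but including it costs nothing.)

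Next, assume $\zeta_f(\cdot,i)$ is not identically zero on $[0,T]$. By Proposition~\ref{p:analytic} the set $\mcZ_f(T,i)$ of its zeros in $[0,T]$ is finite. I claim $\zeta_f^+(\cdot,i)$ is differentiable at every $t\in[0,T]\setminus\mcZ_f(T,i)$. Indeed, fix such a $t$; then $\zeta_f(t,i)\neq0$, so by continuity $\zeta_f(\cdot,i)$ has constant sign on a (relative) neighborhood $U$ of $t$ in $[0,T]$. If $\zeta_f(\cdot,i)>0$ on $U$, then $\zeta_f^+(\cdot,i)=\zeta_f(\cdot,i)$ on $U$, which is analytic, hence differentiable at $t$; if $\zeta_f(\cdot,i)<0$ on $U$, then $\zeta_f^+(\cdot,i)\equiv0$ on $U$, again differentiable at $t$. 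Thus the set of points of non-differentiability of $\zeta_f^+(\cdot,i)$ in $[0,T]$ is contained in the finite set $\mcZ_f(T,i)$, which is the assertion.

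I do not anticipate any genuine obstacle here; the only point requiring a little care is to phrase the argument so that it also covers zeros at which $\zeta_f(\cdot,i)$ does not change sign — there $\zeta_f^+(\cdot,i)$ may in fact still be differentiable, but rather than analyzing the order of vanishing we simply throw all zeros into the exceptional finite set — and the trivial case where $\zeta_f(\cdot,i)$ vanishes identically in time at the site $i$.
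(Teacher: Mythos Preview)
Your proposal is correct and follows essentially the same approach as the paper: the paper observes that points of non-differentiability of $\zeta_f^+(\cdot,i)$ lie among the sign changes, hence in $\mcZ_f(T,i)$, and then invokes Proposition~\ref{p:analytic}. Your version is marginally more detailed (handling the identically-zero case explicitly and arguing differentiability directly on each constant-sign neighborhood), but the substance is the same.
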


\begin{proof}
Clearly the set of points in $[0,T]$ where $\zeta^+(\cdot,i)$ is not 
differentiable is a subset of the set of points in $[0,T]$ where 
$\zeta(\cdot,i)$ changes its sign. But the latter set is a subset of 
$\mcZ_f(T,i)$. Therefore the claim is a consequence of Proposition 
\ref{p:analytic}.
\end{proof}

\begin{proposition}[Properties of $\zeta^-$]\label{p:propofZminus}
 Let $f:\bbZ^d\mapsto\bbR$. There is a function $q_f:\bbR_+\times\bbZ^d\mapsto\bbR$ 
such that 
 \begin{equation}\label{e:equationforq}
  \zeta_f^-(t,i)=f^-(i)+\int_0^t \Delta \zeta_f^-(s,i)\,\dx s- q_f(t,i)\,,\quad 
t\geq0\,, i\in\bbZ^d\,.
 \end{equation}
For each $i\in\bbZ^d$, the function $q_f(\cdot,i):\bbR_+\mapsto\bbR$ 
has the following properties:
\begin{enumerate}[(i)]
 \item $q_f(\cdot,i)$ is non-negative and non-decreasing.
 \item For $T>0$, define 
$\mcQ_f(T,i):=\{t\in[0,T]:\text{the derivative of }q(t,i)\text{ exists in 
}t\}$. Then \mbox{$|T\setminus \mcQ_f(T,i)|<\infty$}. 
 \item For any $t>0$ define 
$$\partial_t 
q_f(t,i):=\left\{\begin{array}{ll} 
\text{{\rm the derivative of} $q(\cdot,i)$ {\rm at} $t$}, &\text{{\rm if} $t\in\bigcup_{T>0} \mcQ_f(T,i)$,
}\\
\text{$0,$} &\text{\rm if  $t\not\in\bigcup_{T>0} \mcQ_f(T,i).$}
\end{array}
\right.
$$

Then, 
\begin{equation}\label{e:abscont}
 q_f(t,i)=\int_0^t \partial_s q_f(s,i)\,\dx s\,,\quad t\geq0\,,
\end{equation}
that is, $q_f(\cdot,i)$ is absolutely continuous.
\end{enumerate}
\end{proposition}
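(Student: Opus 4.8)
The plan is to \emph{define} $q_f$ by rearranging the desired identity~\eqref{e:equationforq} and then read off its three properties from the analyticity of $\zeta_f(\cdot,i)$ supplied by Proposition~\ref{p:analytic} and Corollary~\ref{c:zetadiff} (where we use that $f$ is bounded, so that those results apply). Fix $i\in\bbZ^d$. For every $j$ the map $s\mapsto\zeta_f(s,j)$ is real analytic, hence Lipschitz on compacts, so $s\mapsto\zeta_f^-(s,j)=\max\{-\zeta_f(s,j),0\}$ is Lipschitz on compacts too, and therefore so is the finite combination $s\mapsto\Delta\zeta_f^-(s,i)=\frac1{2d}\sum_{j\sim i}\zeta_f^-(s,j)-\zeta_f^-(s,i)$; in particular it is continuous. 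I would then set
\begin{equation}
 q_f(t,i):=f^-(i)+\int_0^t\Delta\zeta_f^-(s,i)\,\dx s-\zeta_f^-(t,i)\,,\qquad t\geq0\,,
\end{equation}
so that~\eqref{e:equationforq} holds by construction, and $q_f(0,i)=f^-(i)-\zeta_f^-(0,i)=0$ since $\zeta_f(0,i)=f(i)$.

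For the regularity claims (ii) and (iii), write $q_f(\cdot,i)=f^-(i)+I(\cdot)-\zeta_f^-(\cdot,i)$ with $I(t)=\int_0^t\Delta\zeta_f^-(s,i)\,\dx s$. Since $\Delta\zeta_f^-(\cdot,i)$ is continuous, $I\in C^1$ with $I'(t)=\Delta\zeta_f^-(t,i)$; since $\zeta_f(\cdot,i)$ is analytic, $\zeta_f^-(\cdot,i)$ is Lipschitz on every $[0,T]$ and, by Corollary~\ref{c:zetadiff}, differentiable off a finite subset of each $[0,T]$. Hence $q_f(\cdot,i)$ is Lipschitz on every $[0,T]$, which yields absolute continuity (claim (iii)), and it is differentiable exactly where $\zeta_f^-(\cdot,i)$ is, i.e.\ off a finite subset of each $[0,T]$ (claim (ii)); the fundamental theorem of calculus for Lipschitz functions then gives $q_f(t,i)=\int_0^t\partial_s q_f(s,i)\,\dx s$ with $\partial_s q_f$ as defined in (iii).

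It remains to prove (i). Since $q_f(0,i)=0$ and $q_f(\cdot,i)$ is absolutely continuous, it suffices to check $\partial_t q_f(t,i)\geq0$ for all but finitely many $t$ in each $[0,T]$. By Proposition~\ref{p:analytic} the zero set $\mcZ_f(T,i)$ of $\zeta_f(\cdot,i)$ is finite; take $t\in[0,T]$ outside $\mcZ_f(T,i)$ (and outside the finitely many points where $\zeta_f^-(\cdot,i)$ fails to be differentiable). If $\zeta_f(t,i)>0$, then $\zeta_f^-(\cdot,i)\equiv0$ near $t$, so $\partial_t\zeta_f^-(t,i)=0$ and, using $\zeta_f^-(t,i)=0$,
\begin{equation}
 \partial_t q_f(t,i)=\Delta\zeta_f^-(t,i)=\frac1{2d}\sum_{j\sim i}\zeta_f^-(t,j)\geq0\,.
\end{equation}
If $\zeta_f(t,i)<0$, then $\zeta_f^-(\cdot,i)=-\zeta_f(\cdot,i)$ near $t$, so $\partial_t\zeta_f^-(t,i)=-\Delta\zeta_f(t,i)$, and using $\zeta_f=\zeta_f^+-\zeta_f^-$ together with $\zeta_f^+(t,i)=0$,
\begin{equation}
 \partial_t q_f(t,i)=\Delta\zeta_f^-(t,i)+\Delta\zeta_f(t,i)=\Delta\zeta_f^+(t,i)=\frac1{2d}\sum_{j\sim i}\zeta_f^+(t,j)\geq0\,.
\end{equation}
So $\partial_t q_f(t,i)\geq0$ off a finite set, whence $q_f(\cdot,i)$ is non-negative and non-decreasing, which is (i).

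Once Proposition~\ref{p:analytic} is in hand the argument is essentially bookkeeping; the step I expect to be the crux is the sign computation just above, and the point to watch there is that the Laplacian terms are evaluated at the \emph{neighbours} of $i$, so that in each case the diagonal contribution $\zeta_f^{\mp}(t,i)$ vanishes for the relevant sign of $\zeta_f(t,i)$ and what remains is a manifestly non-negative sum over $j\sim i$ --- this is where the heat equation, rather than mere $C^1$ regularity of $\zeta_f(\cdot,i)$, is used. A secondary (purely bookkeeping) subtlety is to keep the ``all but finitely many $t$'' assertions, needed for (i) and (ii) and furnished by finiteness of $\mcZ_f(T,i)$, distinct from the weaker ``a.e.\ $t$'' statement that suffices for the fundamental theorem of calculus in (iii).
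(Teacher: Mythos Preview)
Your proof is correct and follows essentially the same route as the paper's. The only cosmetic difference is that the paper defines $q$ through $\zeta^+$, namely
\[
 q(t,i):=-\zeta^+(t,i)+\zeta^+(0,i)+\int_0^t\Delta\zeta^+(s,i)\,\dx s\,,
\]
and then obtains \eqref{e:equationforq} by writing $\zeta^-=\zeta^+-\zeta$ and using the heat equation, whereas you define $q$ directly by rearranging \eqref{e:equationforq}; a one-line computation shows the two definitions coincide. The monotonicity argument is identical in substance: the paper's two cases $\zeta(t,i)>0$ and $\zeta(t,i)<0$ give $\partial_t q=\Delta\zeta^+-\Delta\zeta$ and $\partial_t q=\Delta\zeta^+$ respectively, which are exactly your $\Delta\zeta^-$ and $\Delta\zeta^-+\Delta\zeta$ rewritten. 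Your explicit remark that boundedness of $f$ is needed to invoke Proposition~\ref{p:analytic} and Corollary~\ref{c:zetadiff} is a useful clarification that the paper leaves implicit.
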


\noindent
For simplicity we will omit the subindex $f$ whenever confusion is impossible.

\begin{proof}[Proof of Poposition \ref{p:propofZminus}]
Since $f$ is fixed  we drop the subindex $f$ in what follows in the proof of the proposition. Also in the following we consider $i\in\bbZ^d$ 
to be fixed and $\zeta$ as a function of $t$ only. Define 
\begin{equation}\label{e:defQ}
 q(t,i):=-\zeta^+(t,i)+\zeta^+(0,i)+\int_0^t\Delta \zeta^+(s,i)\dx s\,,\quad 
t\geq0. 
\end{equation}
Then, by writing $\zeta^-(t,i)=\zeta^+(t,i)-\zeta(t,i)$ and re-arranging terms 
in the discrete heat equation we obtain 
\begin{align}
 \zeta^-(t,i)&= \zeta^-(0,i)+\int_0^t \Delta \zeta^-(s,i)\,\dx s - q(t,i), \quad 
t\geq0. 
\end{align}
Corollary \ref{c:zetadiff} and the definition of $q$ in \eqref{e:defQ} 
clearly imply that $|T\setminus \mcQ_f(T,i)|<\infty$ for each $T>0$. Since in addition we 
know that $\zeta(\cdot,i)$ is analytic, it is easy to see that 
\eqref{e:abscont} holds true.

We still have to prove that $q(t,i)$ is  non-decreasing in $t$.
As   $(T\setminus\mcQ(T,i))\subset\mcZ_f(T,i)$ and $|\mcZ_f(T,i)|<\infty$ for every $T>0$ it is clearly enough to prove that 
$\partial_t q(t,i)\geq0$ for $t\in\bigcup_{T>0}(T\setminus \mcZ_f(T,i))$. So, let 
$t\in\bigcup_{T>0}(T\setminus \mcZ_f(T,i))$. Then \eqref{e:defQ} implies 
\begin{equation}\label{e:partialq}
 \partial_t q(t,i)=-\partial_t \zeta^+(t,i)+\Delta \zeta^+(t,i)\,.
\end{equation}
Now, assume that $\zeta(t,i)> 0$. Then $ \zeta^+(t,i)= \zeta(t,i)$ and 
$\partial_t \zeta^+(t,i)=\partial_t \zeta(t,i)$. Therefore, since $\partial_t \zeta(t,i)=\Delta \zeta(t,i)$ we get from \eqref{e:partialq} that 
\begin{align}
 \partial_t q(t,i)&=\Delta \zeta^+(t,i)-\Delta \zeta(t,i)\\
 &= \frac{1}{2d} \sum_{j\sim i} \Bigl(\zeta^+(t,j)-\zeta(t,j)\Bigr)\\
 &\geq 0.
\end{align}
Alternatively if $\zeta(t,i)< 0$, we have that $\zeta^+(t,i)=0$ and $\partial_t \zeta^+(t,i)=0$. Therefore \eqref{e:partialq} implies
\begin{align}
 \partial_t q(t,i)&=\Delta \zeta^+(t,i)\\
 &= \frac{1}{2d} \sum_{j\sim i}\zeta^+(t,j)\\
 &\geq 0.
\end{align}
The above gives us that $q(t,i)$ is non-decreasing in $t$. 
Since 
$q(0,i)=0$ this implies also $q(t,i)\geq0$. Thus we have proved all claims.
%
\end{proof}

\begin{lemma}\label{l:DHE}
Let $f\in E_\textup{fin}$ and denote $M:=\langle 
f,\boldsymbol{1}\rangle$. Then, 
\begin{equation}\label{2701141124}
 \lim_{t\to\infty} \langle |\zeta_f(t)- \zeta^M(t)|,\boldsymbol{1}\rangle =0\,.
\end{equation} 
\end{lemma}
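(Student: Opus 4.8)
The plan is to reduce the convergence of $\zeta_f(t)-\zeta^M(t)$ in $\ell^1(\bbZ^d)$ to the well-known local central limit theorem for the simple random walk together with an approximation argument that exploits $f\in E_\textup{fin}$. Write $\zeta_f(t,i)=\sum_j p_t(i,j)f(j)=\sum_j p_t(0,j-i)f(j)$ and $\zeta^M(t,i)=M\,p_t(0,i)=M\,p_t(0,-i)$, using symmetry of the walk. The key point is that since $f$ has finite total mass $M=\langle f,\boldsymbol1\rangle$, for large $t$ the ``smeared'' profile $\zeta_f(t,\cdot)$ should look like $M$ times a single point source, because the random walk has spread the mass of $f$ over a region of diameter $\sim\sqrt t$, which dwarfs the support-scale of $f$'s mass.

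First I would treat the case $f\in E_\textup{cpt}$, say $\supp f\subset B_R$. Then
\begin{equation}
\langle|\zeta_f(t)-\zeta^M(t)|,\boldsymbol1\rangle
=\sum_{i\in\bbZ^d}\Bigl|\sum_{j\in B_R}f(j)\bigl(p_t(0,i-j)-p_t(0,i)\bigr)\Bigr|
\leq\sum_{j\in B_R}|f(j)|\sum_{i\in\bbZ^d}\bigl|p_t(0,i-j)-p_t(0,i)\bigr|.
\end{equation}
So it suffices to show that for each fixed $j$, $\sum_i|p_t(0,i-j)-p_t(0,i)|\to0$ as $t\to\infty$; equivalently, the total variation distance between the law of $Z_t$ and that of $Z_t+j$ tends to $0$. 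This is a standard fact: by the local CLT, $p_t(0,i)=t^{-d/2}\bigl(\varphi(i/\sqrt t)+o(1)\bigr)$ uniformly, where $\varphi$ is a Gaussian density, and shifting the argument by a fixed $j$ changes this by $O(t^{-(d+1)/2})$ per site over $O(t^{d/2})$ relevant sites, giving $O(t^{-1/2})\to0$; alternatively one can couple $Z_t$ and $Z_t+j$ and bound the coupling time. Summing the (finitely many) contributions over $j\in B_R$ handles the compactly supported case.

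For general $f\in E_\textup{fin}$, given $\eps>0$ choose $R$ with $\sum_{|j|>R}|f(j)|<\eps$, and split $f=f'+f''$ with $f'=f\mathbbm1_{B_R}\in E_\textup{cpt}$ and $\|f''\|_1<\eps$. Let $M'=\langle f',\boldsymbol1\rangle$, $M''=\langle f'',\boldsymbol1\rangle$, so $|M''|<\eps$. By linearity $\zeta_f=\zeta_{f'}+\zeta_{f''}$ and $\zeta^M=\zeta^{M'}+\zeta^{M''}$, hence by the triangle inequality
\begin{equation}
\langle|\zeta_f(t)-\zeta^M(t)|,\boldsymbol1\rangle
\leq\langle|\zeta_{f'}(t)-\zeta^{M'}(t)|,\boldsymbol1\rangle
+\langle\zeta_{f''}^-(t)+\zeta_{f''}^+(t),\boldsymbol1\rangle
+\langle|\zeta^{M''}(t)|,\boldsymbol1\rangle.
\end{equation}
The first term $\to0$ by the compactly supported case. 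The last term equals $|M''|<\eps$ for all $t$. For the middle term, $\langle\zeta_{f''}^+(t)+\zeta_{f''}^-(t),\boldsymbol1\rangle=\langle|\zeta_{f''}(t)|,\boldsymbol1\rangle\leq\sum_i\sum_j p_t(0,i-j)|f''(j)|=\|f''\|_1<\eps$, since $(P_t)$ is a contraction on $\ell^1$. Thus $\limsup_{t\to\infty}\langle|\zeta_f(t)-\zeta^M(t)|,\boldsymbol1\rangle\leq 2\eps$, and letting $\eps\downarrow0$ gives the claim.

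The main obstacle is the quantitative estimate $\sum_i|p_t(0,i-j)-p_t(0,i)|\to0$; everything else is bookkeeping. I would prove it either by invoking the local CLT (with uniform error bounds, e.g. from Spitzer or Lawler--Limic) or, more self-containedly, by a coupling/reflection argument showing two copies of the walk started $j$ apart meet by time $t$ with probability $\to1$ — the latter avoids the need for sharp density asymptotics and works uniformly enough for the sum. Since $j$ ranges over a finite set in the reduction, no uniformity in $j$ is needed, which keeps this step elementary.
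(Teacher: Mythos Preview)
Your proof is correct and follows essentially the same route as the paper: both reduce to the key estimate $\sum_i|p_t(0,i-j)-p_t(0,i)|\to0$ for each fixed $j$, and the paper establishes this precisely by citing Lawler--Limic (after passing to discrete time via the Poisson representation). The only cosmetic difference is that the paper handles general $f\in E_\textup{fin}$ in one step via dominated convergence (using the uniform bound $\sum_i|p_t(0,i-j)-p_t(0,i)|\leq2$), whereas you do an explicit compact-support truncation plus tail bound; these are equivalent.
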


For a related result for the heat equation on $\mathbb{R}^d$ see (1.10) in 
\cite{EZ91}. We are grateful to Yehuda Pinchover who pointed us to this 
reference.

\begin{proof}
Recall that $p_t(\cdot,\cdot)$ denotes the transition probability of a 
simple symmetric random walk on $\bbZ^d$ with the generator $\Delta$. Note that then $\zeta_f(t,i)=\sum_j 
p_t(i,j)f(j)=\sum_i p_t(0,i-j)f(j)$ and thus 
$\zeta^M(t,i)=Mp_t(0,i)=\sum_j f(j)p_t(0,i)$. Therefore, 
\begin{align}
 \langle |\zeta_f(t)- \zeta^M(t)|,\boldsymbol{1}\rangle
 &=\sum_i \Big|\sum _j  (p_t(0,i-j)-p_t(0,i)) f(j)\Big| \\
 &\leq\sum_j |f(j)|\sum_i| p_t(0,i-j)-p_t(0,i)|, \quad 
t\geq0. 
 \end{align}
 Now since $\sum_i| p_t(0,i-j)-p_t(0,i)| \leq 2$ and $f\in E_\textup{fin}$ the 
dominated convergence theorem implies that it is enough to show that, for each 
$j$, 
\begin{equation}
 \lim_{t\to\infty}\sum_i| p_t(0,i-j)-p_t(0,i)| =0\,.
\end{equation}
Let $p^n(0,i)$ be the probability that a discrete time simple random walk 
on $\bbZ^d$ jumps from $0$ to $i$ in $n$ steps. Then
\begin{align}
 \sum_i| p_t(0,i-j)-p_t(0,i)|
 &\leq e^{-t} \sum_{n\geq0} \frac{t^n}{n!} \sum_i |p^n(0,i-j)-p^n(0,i)|, \quad 
t\geq0. 
\end{align}
By Proposition 2.4.1 in Lawler and Limic \cite{MR2677157}, 
one has for all $j\in\bbZ^d$
\begin{equation}
 \sum_i| p^n(0,i-j)-p^n(0,i)|\leq c|j| n^{-1/2}\,,
\end{equation}
for some constant $c$. Hence,
\begin{equation}
 \sum_i| p_t(0,i-j)-p_t(0,i)| \leq c|j| e^{-t} \sum_{n\geq0} \frac{t^n}{n!}  
\frac1{n^{1/2}}\,,
\end{equation}
which, as can be easily checked, for each $j$ tends to $0$ as $t\to\infty$ and we are done.

\end{proof}

The above lemma implies the following proposition.

\begin{proposition}[longtime behavior of $\zeta^-$ and $\zeta^+$]\label{l:HE}
Let $f\in E_\textup{fin}$ and denote $M:=\langle 
f,\boldsymbol{1}\rangle$. Assume $M\geq0$. Then,
\begin{align}
\label{1_09_1}
 \lim_{t\to\infty} \langle |\zeta_f(t)|,\boldsymbol{1}\rangle =
 \lim_{t\to\infty} \langle \zeta^+_f(t),\boldsymbol{1}\rangle=M\,,\quad 
 \lim_{t\to\infty} \langle \zeta^-_f(t),\boldsymbol{1}\rangle=0\,.
\end{align}
Moreover, 
\begin{equation}
 \lim_{t\to\infty}\langle q_f(t),\boldsymbol{1}\rangle=\langle 
f^-,\boldsymbol{1}\rangle\,.
\end{equation}
\end{proposition}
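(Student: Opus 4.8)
The plan is to deduce Proposition~\ref{l:HE} directly from Lemma~\ref{l:DHE} together with the properties of $q_f$ collected in Proposition~\ref{p:propofZminus}, and the elementary fact that for $f\in E_\textup{fin}$ and $M:=\langle f,\boldsymbol{1}\rangle\geq0$ the function $\zeta^M$ is (a multiple of) a probability kernel, so $\langle \zeta^M(t),\boldsymbol{1}\rangle=M$ for all $t\geq0$ and $\zeta^M(t)\geq0$. Thus $\zeta^M = (\zeta^M)^+$ and $(\zeta^M)^-\equiv0$.

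First I would prove the middle and right-hand limits in \eqref{1_09_1}. By the triangle inequality and nonnegativity of $\zeta^M$,
\begin{equation}
 \bigl|\,\langle |\zeta_f(t)|,\boldsymbol{1}\rangle - M\,\bigr|
 = \bigl|\,\langle |\zeta_f(t)|,\boldsymbol{1}\rangle - \langle |\zeta^M(t)|,\boldsymbol{1}\rangle\,\bigr|
 \leq \langle |\zeta_f(t)-\zeta^M(t)|,\boldsymbol{1}\rangle\,,
\end{equation}
which tends to $0$ by Lemma~\ref{l:DHE}; this gives $\lim_{t\to\infty}\langle|\zeta_f(t)|,\boldsymbol{1}\rangle=M$. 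Next, since $\zeta_f^+ = \tfrac12(|\zeta_f|+\zeta_f)$ and $\langle\zeta_f(t),\boldsymbol{1}\rangle=\langle f,\boldsymbol{1}\rangle=M$ is constant in $t$ (mass conservation for the heat equation), we get $\langle\zeta_f^+(t),\boldsymbol{1}\rangle=\tfrac12(\langle|\zeta_f(t)|,\boldsymbol{1}\rangle+M)\to M$. Likewise $\zeta_f^- = \tfrac12(|\zeta_f|-\zeta_f)$ gives $\langle\zeta_f^-(t),\boldsymbol{1}\rangle=\tfrac12(\langle|\zeta_f(t)|,\boldsymbol{1}\rangle-M)\to 0$. (One could equally well write $\langle\zeta_f^-(t),\boldsymbol 1\rangle$ directly via comparison with $\langle|\zeta_f(t)-\zeta^M(t)|,\boldsymbol 1\rangle$ since $\zeta_f^-\le |\zeta_f-\zeta^M|$ pointwise, using $\zeta^M\ge0$.)

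It remains to identify $\lim_{t\to\infty}\langle q_f(t),\boldsymbol{1}\rangle$. Summing \eqref{e:equationforq} over $i\in\bbZ^d$ and using that $\langle\Delta\zeta_f^-(s),\boldsymbol{1}\rangle=0$ (the discrete Laplacian is antisymmetric and $\zeta_f^-(s)\in E_\textup{fin}$ for each $s$, which has to be justified: $0\le\zeta_f^-(s,i)\le \zeta^{M'}(s,i)$ for a suitable summable dominating profile built from $f^-$, so term-by-term summation and Fubini are legitimate), one obtains
\begin{equation}
 \langle q_f(t),\boldsymbol{1}\rangle = \langle f^-,\boldsymbol{1}\rangle - \langle \zeta_f^-(t),\boldsymbol{1}\rangle\,, \quad t\geq0\,.
\end{equation}
Letting $t\to\infty$ and using $\langle\zeta_f^-(t),\boldsymbol{1}\rangle\to0$ from the first part yields $\lim_{t\to\infty}\langle q_f(t),\boldsymbol{1}\rangle=\langle f^-,\boldsymbol{1}\rangle$, as claimed. (Consistency check: by Proposition~\ref{p:propofZminus}(i) the left side is nondecreasing and bounded above by $\langle f^-,\boldsymbol 1\rangle$, so the limit exists without appeal to Lemma~\ref{l:DHE}, and the value is pinned down by the displayed identity.)

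The only genuinely delicate point is justifying the interchange of the infinite sum over $\bbZ^d$ with the time-integral and with $\Delta$ when summing \eqref{e:equationforq}; the main obstacle is therefore establishing a uniform-in-time summable bound on $\zeta_f^-(s,\cdot)$ and on $\Delta\zeta_f^-(s,\cdot)$. This is handled by noting $\zeta_f^- \le \zeta_{f^-}$ pointwise (the heat flow is monotone, and $\zeta_{f^-}\ge \zeta_f$ hence $\zeta_{f^-}=\zeta_{f^-}^+\ge \zeta_f^-$ after also using $\zeta_{f^-}\ge0$), with $\langle\zeta_{f^-}(s),\boldsymbol 1\rangle=\langle f^-,\boldsymbol 1\rangle<\infty$ uniformly in $s$; a comparable bound for $|\Delta\zeta_f^-|$ follows from boundedness of $\Delta$ on $\ell^1$. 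Everything else is bookkeeping with mass conservation and the decompositions $|\zeta_f| = \zeta_f^+ + \zeta_f^-$, $\zeta_f = \zeta_f^+ - \zeta_f^-$.
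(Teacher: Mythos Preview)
Your proof is correct and follows essentially the same approach as the paper: the reverse triangle inequality combined with Lemma~\ref{l:DHE} gives $\langle|\zeta_f(t)|,\boldsymbol{1}\rangle\to M$, the decomposition $\zeta_f^\pm=\tfrac12(|\zeta_f|\pm\zeta_f)$ together with mass conservation yields the limits for $\zeta_f^\pm$, and summing \eqref{e:equationforq} over $i$ gives the $q_f$ limit. Your added care in justifying the interchange of the sum over $\bbZ^d$ with the time integral (via the domination $\zeta_f^-\le\zeta_{f^-}$) is a point the paper leaves implicit.
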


\begin{proof}
Recall that $\langle \zeta^M(t),\boldsymbol{1}\rangle=M$ for all $t\geq0$. Thus 
by 
the reverse triangle inequality (applied to the $L^1$ norm on 
$E_\textup{fin}$), 
\begin{align}
 \bigl|\langle |\zeta_f(t)|,\boldsymbol{1}\rangle -M\bigr|&=
 \bigl|\langle |\zeta_f(t)|,\boldsymbol{1}\rangle -\langle 
\zeta^M(t),\boldsymbol{1}\rangle\bigr|
 =\bigl|\|\zeta_f(t)\|_1-\|\zeta^M(t)\|_1\bigr|\\
 &\leq \|\zeta_f(t)-\zeta^M(t)\|_1=\langle 
|\zeta_f(t)-\zeta^M(t)|,\boldsymbol{1}\rangle, \quad 
t\geq0. 
\end{align}
By  Lemma \ref{l:DHE}, $\lim_{t\to\infty}\langle 
|\zeta_f(t)-\zeta^M(t)|,\boldsymbol{1}\rangle=0$.
Now we use 
\begin{align}
 \langle \zeta^\pm_f(t),\boldsymbol{1}\rangle
 =\frac12 ( \langle|\zeta_f(t)|,\boldsymbol{1}\rangle\pm \langle 
\zeta_f(t),\boldsymbol{1}\rangle)
 =\frac12(\langle |\zeta_f(t)|,\boldsymbol{1}\rangle\pm M), \quad 
t\geq0. 
\end{align}
to get~\eqref{1_09_1}. 
Finally, \eqref{e:equationforq} and the fact that 
$\lim_{t\to\infty}\langle\zeta^-_f(t),\boldsymbol{1}\rangle= 0$ imply 
$\lim_{t\to\infty}\langle q_f(t),\boldsymbol{1}\rangle= \langle 
f^-,\boldsymbol{1}\rangle$.
\end{proof}

We are now ready to prove the non-coexistence result for SBM(1). 

\begin{proof}[Proof of Theorem \ref{t:main}\eqref{i:sumnoncoex}]
Recall that $\eta_t=v_t-u_t$ and $u_t=w_t+\eta_t^-$, where 
$w_t=\min\{u_t,v_t\}$. Since we assume that $\langle 
u_0,\boldsymbol{1}\rangle\leq \langle 
v_0,\boldsymbol{1}\rangle$, we have 
$\langle \eta_0,\boldsymbol{1}\rangle\geq0$. Therefore, Proposition \ref{l:HE} 
implies that
\begin{equation}\label{e:negtozero}
 \lim_{t\to\infty}\langle \eta_t^-,1 \rangle =0\,.
\end{equation}
Hence, if we can also prove
that 
\begin{equation}\label{e2005141104}
 \wlim_{t\to\infty}\langle w_t,\boldsymbol{1}\rangle=0\,.
\end{equation}
then $\wlim_{t\to\infty}\langle 
u_t,\boldsymbol{1}\rangle=\wlim_{t\to\infty}(\langle w_t,\boldsymbol{1}\rangle 
+\langle\eta_t^-,\boldsymbol{1}\rangle)
=0$ and we are done. So we are left with the task to show 
\eqref{e2005141104}. The idea is to use the fact that $w$ 
is ``approximately'' dual to a PAM as $t\to\infty$.

Note again that $\eta_t=\zeta_{v_0-u_0}(t)$ is  the solution of the discrete heat 
equation started in $v_0-u_0$. Hence, since $w_t=u_t-\eta^-_t$, by Proposition 
\ref{p:propofZminus}, we get that $w$ satisfies the following 
equation
\begin{equation}\label{e:semimtgl}
 w_t(i)=w_0(i)+\int_0^t\Delta w_s(i)\,\dx s+\int_0^t\sqrt{b u_s(i)v_s(i)}\,\dx 
W_s(i)+q(t,i)\,,\quad t\geq0\,,i\in\bbZ^d\,.
\end{equation}
Here, actually $q(t,i)=q_{v_0-u_0}(t,i)$ but for simplicity we omit the 
subindex. By \eqref{e:semimtgl} and the properties of $q$ proved in Proposition 
\ref{p:propofZminus}, $w_{\cdot}(i)$ is a semimartingale, for every $i\in \bbZ^d$. 
Note that 
\begin{equation}\label{e:wsquarelessuv}
  w_t^2(i)\leq u_t(i)v_t(i)\,,\quad t\geq0\,,i\in\bbZ^d\,,
\end{equation}
an observation which will be essential for the comparison between $w$ and the 
PAM. 

Now, fix $\eps>0$ arbitrary small. Recall that we denote $\bar{q}(t)=\sum_i q(t,i)$. By 
Proposition \ref{p:propofZminus}, 
$\lim_{t\to\infty}\bar{q}(t)=\bar{q}^\infty$ exists and is finite.
Define
\begin{equation}
\label{28_12_3}
 T^\ast := \inf\{t\geq 0:0\leq \bar{q}^\infty-\bar{q}(t)\leq \eps\}.
 \end{equation}

Now let $\tilde{w}=(\tilde{w}_t)_{t\geq0}$ be a PAM starting at $\tilde w_0$ and independent of $w$.  That 
is, it satisfies 
\begin{align}
\tilde{w}_t(i)= \tilde{w}_0(i)+\int_0^t\Delta\tilde{w}_s(i)\,\dx s
+\sqrt{b \tilde{w}_s(i)^2}\,\dx \tilde{W}_s(i)\,,\quad t\geq0\,, i\in\bbZ^d\,,
\end{align}
where $\tilde{W}(i)$, $i\in\bbZ^d$, are independent Brownian motions which are 
assumed to be also independent of  $\{W(i), i\in\bbZ^d\}$. 

Now recall the following result which is stated in Lemma 4.4.10 of Ethier and Kurtz~\cite{EK86}. 
Let $f:[0,\infty)\times[0,\infty)\mapsto\bbR$ be a function such that 
$f(\cdot,t)$ is absolutely continuous for each $t$ and $f(s,\cdot)$ 
is absolutely continuous for each $s$ and $\int_0^T \int_0^T |
f_1(s,t)|\,\dx s\,\dx t, \int_0^T \int_0^T |f_2(s,t)|\,\dx s\,\dx t<\infty$, $T\geq0$. Here $(f_1,f_2)=\nabla f$. 
 Then, for every $R\geq0$, 
\begin{equation}
\label{eq:2_08_1}
 f(T,0)-f(R,T-R)
 =\int_R^T( f_1(s,T-s)-f_2(s,T-s))\,\dx s\,,\quad\text{for 
almost every $T\geq R$.}
\end{equation}
Lemma 4.4.10 \cite{EK86} actually states this formula only for $R=0$ but it can 
easily verified that it holds for all $R\geq0$.
Now we would like to apply~\eqref{eq:2_08_1} to the function
\begin{equation}
\label{eq:2_08_2}
 f(a,b)=\bbE[\exp(-\langle w_a,\tilde{w}_b\rangle)]\,,\quad a,b\geq0\,,
\end{equation}
with $R=T^\ast$.
Note that It\^{o}'s formula implies that $f$ is absolutely continuous in 
$a$ for each 
fixed $b$ and it is absolutely continuous in $b$ for each fixed $a$. Hence the conditions of  
Lemma 4.4.10 in~\cite{EK86}   are satisfied. Now we use It\^{o}'s formula to calculate $ 
f_1(s,t)$ and $ f_2(s,t)$. Then, for all non-negative $\phi\in E_\textup{fin}$, we have 
\begin{align}
 \exp(-\langle w_s, \phi\rangle)\\
&\hspace{-1cm}=\exp(-\langle w_0, \phi\rangle)-
 \sum_i \int_0^s e^{-\langle w_r, \phi\rangle} \phi(i)\,\dx 
w_r(i)\\
&\hspace{-1cm}\quad\,+\frac12 \sum_i\int_0^s e^{-\langle 
w_u,\phi\rangle}  \phi(i)^2\,\dx 
[w(i)]_r\\
&\hspace{-1cm}=\exp(-\langle w_0, \phi\rangle)-
  \int_0^s e^{-\langle w_r, \phi\rangle}\sum_i  \phi(i)\Delta 
w_r(i)\,\dx r\\
&\hspace{-1cm}\quad\,-\int_0^s e^{-\langle w_r,  \phi\rangle} 
\sum_i  \phi(i)\sqrt{bu_r(i)v_r(i)}\,\dx 
W_r(i)
-\sum_i \int_0^s e^{-\langle w_r,  \phi\rangle}  \phi(i)\,\dx 
q(r,i)\\
&\hspace{-1cm}\quad\,+\frac12 \int_0^s e^{-\langle 
w_r, \phi\rangle}\sum_i  \phi(i)^2 bu_r(i)v_r(i)\,\dx 
r, \quad s\geq 0. \label{e:series}
\end{align}
It is easy to check that $s\mapsto \int_0^s e^{-\langle w_r,  \phi\rangle} 
\sum_i  \phi(i)\sqrt{bu_r(i)v_r(i)}\,\dx 
W_r(i)$ is a martingale and not just the local martingale (see Proposition~3.2 of Blath, D\"{o}ring and  Etheridge~\cite{BDE11} for a relevant result). Thus,
taking expectation on both sides yields
\begin{align}
 \MoveEqLeft\bbE[\exp(-\langle w_s,  \phi\rangle)]\\
 &=\exp(-\langle w_0, \phi\rangle)
 -\bbE\left[\int_0^s e^{-\langle w_r, \phi\rangle}\langle\Delta 
w_r,  \phi\rangle \,\dx 
r\right]
- \bbE\left[\langle \int_0^s e^{-\langle w_r, \phi\rangle} \,\dx q(r), 
\phi\rangle  \right]\\
&\quad\, + \bbE\left[\frac12  \int_0^s e^{-\langle 
w_r, \phi\rangle}\langle bu_rv_r, \phi^2 
\rangle\,\dx r \right], \quad s\geq 0. 
\end{align}
Taking derivative with respect to $s$  (it exits for almost every $s$ for which $\partial_s q(s)$ exists, see Proposition~\ref{p:propofZminus}) yields
\begin{align}
 \partial_s &
 \bbE\left[\exp(-\langle w_s,  \phi\rangle)\right]\\
 \label{eq:19_09_1}
 &=-\bbE\left[e^{-\langle w_s, \phi\rangle}\langle\Delta 
w_s, \phi \rangle\right]-\bbE\left[e^{-\langle 
w_s, \phi\rangle}\langle 
\partial_s q(s), \phi\rangle\right]+ \bbE\left[\frac12 e^{-\langle 
w_s,  \phi\rangle}\langle bu_sv_s, \phi^2 \rangle\right],\; s\geq 0. 
\end{align}
Now assume that $\tilde{w}_0\in E_\textup{fin}$. In exactly the same way as 
above we calculate for every $\psi\in E_\textup{fin}$ (use that the Laplacian is 
self-adjoint):
\begin{align}
 \partial_t \MoveEqLeft\bbE\left[\exp(-\langle \psi, \tilde{w}_t\rangle)\right]\\
 &=-\bbE\left[e^{-\langle \psi,\tilde{w}_t\rangle}\langle \psi,\Delta 
\tilde{w}_t \rangle\right]
+ \bbE\left[\frac12 e^{-\langle 
\psi,\tilde{w}_t\rangle}\langle \psi^2,b\tilde{w}_t^2 \rangle\right]\\
\label{eq:19_09_2}
&=-\bbE\left[e^{-\langle \psi,\tilde{w}_t\rangle}\langle \Delta 
 \psi,\tilde{w}_t \rangle\right]+ \bbE\left[\frac12 e^{-\langle 
\psi,\tilde{w}_t\rangle}\langle b\psi^2,\tilde{w}_t^2 \rangle\right], \quad t\geq 0.
\end{align}
For $x=(x_1,\ldots, x_d)\in \bbZ^d$ 
define $|x|_{\infty}=\sup_{i=1,\ldots,d}|x_i|$ and $B_n=\{ x\in \bbZ^d, |x|_\infty \leq n\}$. Fix arbitrary $\theta>0$. Let $\tilde w^n_0= \theta \mathbf{1}_{B_n}$ and $\tilde w^n$ be the solution to the PAM starting at $\tilde w^n_0$. 
Now by \eqref{eq:2_08_1}, \eqref{eq:2_08_2}, \eqref{eq:19_09_1} and \eqref{eq:19_09_2}, for any $T>T^*$,  we have  
\begin{align}
 &\bbE\left[\exp(-\langle w_{T^*},\tilde{w}^n_{T-T^*})\right]
 -\bbE\left[\exp(-\langle w_T,\tilde{w}^n_0)\right]\\
 &=\int_{T^*}^T 
 -\bbE\left[e^{-\langle w_s,\tilde{w}^n_{T-s}\rangle}\langle \Delta 
 w_s,\tilde{w}_{T-s} \rangle\right]
 + \bbE\left[\frac12 e^{-\langle w_s,\tilde{w}^n_{T-s}\rangle}\langle bw_s^2,(\tilde{w}^n_{T-s})^2 
\rangle\right]\,\dx s\\
 &\quad\,+\bbE\left[e^{-\langle w_s,\tilde{w}^n_{T-s}\rangle}\langle\Delta 
w_s,\tilde{w}^n_{T-s} \rangle\right]
+\bbE\left[e^{-\langle w_s,\tilde{w}^n_{T-s}\rangle}\langle 
\partial_s q(s),\tilde{w}^n_{T-s}\rangle\right]\\
&\quad\,- \bbE\left[\frac12 e^{-\langle w_s,\tilde{w}^n_{T-s}\rangle}\langle bu_sv_s,(\tilde{w}^n_{T-s})^2 
\rangle\right]\\
&=\int_{T^*}^T  \bbE\left[\frac12 e^{-\langle 
w_s,\tilde{w}^n_{T-s}\rangle}\langle b(w_s^2-u_sv_s),(\tilde{w}^n_{T-s})^2 \rangle\right]\,\dx s
+\bbE\left[e^{-\langle w_s,\tilde{w}^n_{T-s}\rangle}\langle 
\partial_s q(s),\tilde{w}^n_{T-s}\rangle\right] 
\\
&\leq \int_0^T \bbE\left[e^{-\langle w_s,\tilde{w}^n_{T-s}\rangle}\langle 
\partial_s q(s),\tilde{w}^n_{T-s}\rangle\right]\,\dx s
\label{e:wsquarelessuv2}
\\
&\leq \int_{T^*}^T \langle \partial_s q(s)	,\bbE\left[\tilde{w}^n_{T-s}\right]\rangle\,\dx 
s
\\
\label{eq:19_09_3}
&=\int_{T^*}^T \langle \partial_s q(s)	,P_{T-s}\tilde{w}^n_{0}\rangle\,\dx 
s
\,.
\end{align}
where we have used \eqref{e:wsquarelessuv} in \eqref{e:wsquarelessuv2} and also the fact that $\partial_s q(s,i)\geq 0,$ for all $ i\in \bbZ^d, t>0 $.
It is easy  to see that as $n\rightarrow\infty$,  $\tilde w^n$ converges weakly to  $\tilde w$, where $\tilde w$ is the solution to PAM with initial condition
 $\tilde{w}_0=\boldsymbol{\theta}$. Also, by monotone convergence theorem, we get  
\begin{equation}
P_{T-s}\tilde{w}^n_{0}(i)  \rightarrow \theta,\;\forall i\in\bbZ^d. 
\end{equation}
as $n\to\infty$. 
Moreover the above convergence of $P_{T-s}\tilde{w}^n_{0}(\cdot)$ to $\theta$ is bounded pointwise convergence and thus we get,
\begin{equation}
\label{eq:19_09_4}
\int_{T^*}^T \langle \partial_s q(s)	,P_{T-s}\tilde{w}^n_{0}\rangle\,\dx s \to 
\theta\int_{T^*}^T \langle \partial_s q(s), \mathbf{1}	 \rangle\,\dx s,\;\text{
as $n\to\infty$. }
\end{equation}

Thus, by~\eqref{eq:19_09_3}, \eqref{eq:19_09_4} and weak convergence of  $\tilde w^n$ to $\tilde w$, we get
\begin{align}
 &\bbE\left[\exp(-\langle w_{T^*},\tilde w_{T-T^*})\right]
 -\bbE\left[\exp(-\theta\langle w_T,\mathbf{1})\right]\\
&\leq \theta\int_{T^*}^T \langle \partial_s q(s), \mathbf{1}	 \rangle\,\dx s  
\\
&=\theta (\bar q(T)-\bar q(T^*)) 
\\
\label{28_12_6}
& \leq \eps\theta, \;\forall  T\geq T^\ast,
\end{align}
where the last inequality follows by~\eqref{28_12_3}. 
We know from Theorem \ref{t:DGH} that  for any compactly supported function $\phi$, 
$\wlim_{t\to\infty}\langle \phi,\tilde{w}_t\rangle=0$ if $b>b_\ast$. Note that $ w_{T^*}$ is not necessarily compactly supported, so we need an additional simple argument. Fix $\tilde \epsilon>0 $ arbitrary small. Since $P_{T^\ast}w_0\in  E_\textup{fin}$ we can take $n$ sufficiently large such that 
\begin{align}
\bbE\left[  \langle w_{T^*}\mathbf{1}_{B^c_n},\tilde w_{T-T^*}\rangle \right]&= \theta  \langle (P_{T^*}w_{0}) \mathbf{1}_{B^c_n},\mathbf{1}\rangle\\
 &= \theta  \langle P_{T^*}w_{0}, \mathbf{1}_{B^c_n}\rangle \\
\label{28_12_1} &\leq \tilde\epsilon. 
\end{align} 
On the other hand, by Theorem \ref{t:DGH},  
\begin{equation}
\label{28_12_2}
\langle w_{T^*}\mathbf{1}_{B_n},\tilde w_{T-T^*}\rangle \Rightarrow 0,
\end{equation} 
at $T\rightarrow\infty$. Since $\tilde\epsilon$ was arbitrary small we get from \eqref{28_12_1}, \eqref{28_12_2}
\begin{equation}
\langle w_{T^*},\tilde w_{T-T^*}\rangle \Rightarrow 0,
\end{equation} 
at $T\rightarrow\infty$.
Hence $\lim_{T\to\infty} \bbE[\exp(-\langle 
w_{T*},\tilde{w}_{T-T^*}\rangle)]=1$. So we 
can find
$T^{\ast\ast}>T^\ast$ such that 
\begin{equation}
\label{28_12_5}
| \bbE[\exp(-\langle 
w_{T*},\tilde{w}_{T-T^*}\rangle)]-1|\leq \eps,\;\text{for}\;T>T^{\ast\ast}.
\end{equation} Hence for $t>T^{\ast\ast}$, we get by~
 \eqref{28_12_6}, \eqref{28_12_5}
\begin{align}
 \bbE[\exp(-\theta\langle w_t,\boldsymbol{1}\rangle)]\geq 
1-\eps(1+\theta).  
\end{align}
This implies \eqref{e2005141104} since $\eps$ was arbitrary small. 
\end{proof}

\bibliography{leonidpatric}
\bibliographystyle{alpha}

\end{document}